\DeclareSymbolFont{calletters}{OMS}{cmsy}{m}{n}
\DeclareSymbolFontAlphabet{\mathcal}{calletters}
\def\be{\begin{eqnarray}}
\def\ee{\end{eqnarray}}
\def\b*{\begin{eqnarray*}}
\def\e*{\end{eqnarray*}}
\newtheorem{Theorem}{Theorem}[part]
\makeatletter \@addtoreset{equation}{section}
\def\={\;=\;}
\def\.{\;.}
\def\1{{\bf 1}}
\def\b*{\begin{eqnarray*}}
\def\e*{\end{eqnarray*}}
 \def\normeL2#1{\left\|{#1}\right\|_{L^2}}
\begin{document}

%\title{Optimal Real-Time Bidding Strategies and the Pricing of Ad-Buying Services\footnote{This research has been conducted with the support of the Research Initiative ``\'Economie et gestion des nouvelles donn\'ees''.}}
\title{Optimal Real-Time Bidding Strategies\footnote{The authors would like to thank Dominique Delport (Havas Media), Julien Laugel (MFG Labs), Pierre-Louis Lions (Coll\`ege de France), and Arnaud Parent (Havas Media) for the discussions they had on the topic. Two anonymous referees also deserve to be thanked.}}

\author{Joaquin Fernandez-Tapia\footnote{Universit\'e Pierre et Marie Curie, Laboratoire de Probabilit\'es et Mod\`eles Al\'eatoires (LPMA). 4 Place Jussieu, 75005 Paris. Joaquin acknowledges support in 2015 from the Chair ``\'Economie et gestion des nouvelles donn\'ees''.}, Olivier Gu\'eant\footnote{Ensae-Crest, 3 avenue Pierre Larousse, 92245 Malakoff Cedex, France. Member of the Scientific Advisory Board of Havas Media. The content of this article does not reflect the official opinion or the practices of Havas Media. Corresponding author: \texttt{olivier.gueant@ensae.fr}.}, Jean-Michel Lasry\footnote{Institut Louis Bachelier. Member of the Scientific Advisory Board of Havas Media. The content of this article does not reflect the official opinion or the practices of Havas Media.}}

\date{}

\maketitle
\begin{abstract}

The ad-trading desks of media-buying agencies are increasingly relying on complex algorithms for purchasing advertising inventory. In particular, Real-Time Bidding (RTB) algorithms respond to many auctions -- usually Vickrey auctions -- throughout the day for buying ad-inventory with the aim of maximizing one or several key performance indicators (KPI). The optimization problems faced by companies building bidding strategies are new and interesting for the community of applied mathematicians. In this article, we introduce a stochastic optimal control model that addresses the question of the optimal bidding strategy in various realistic contexts: the maximization of the inventory bought with a given amount of cash in the framework of audience strategies, the maximization of the number of conversions/acquisitions with a given amount of cash, etc. In our model, the sequence of auctions is modeled by a Poisson process and the \textit{price to beat} for each auction is modeled by a random variable following almost any probability distribution. We show that the optimal bids are characterized by a Hamilton-Jacobi-Bellman equation, and that almost-closed-form solutions can be found by using a fluid limit. Numerical examples are also provided.

\vspace{0.5cm}

\noindent \textbf{Keywords:} Real-Time Bidding, Vickrey auctions, Stochastic optimal control, Convex analysis, Fluid limit approximation.\vspace{5mm}

\end{abstract}

\section{Introduction}

From the viewpoint of a company launching an advertising campaign, the goal of digital advertising is to increase its return on investment by leveraging the different channels enabling an interaction with its potential customers: desktop display, mobile, social media, e-mailing, etc. Usually, this is achieved via branding campaigns, by prospecting individuals who are likely to be in affinity with a given product/campaign, or by driving those who have already shown some interest into a final conversion (\emph{e.g.} a purchase).\\

In recent years, the advertising industry has gone through a lot of upheavals: numerous technological changes, a deluge of newly available data, the emergence of a huge number of ad-tech startups entering the market, etc. In particular, new mechanisms have emerged and have completely changed the way digital ad inventory is purchased. In practice, the inventory is often purchased programmatically, and it is possible to algorithmically buy it unit by unit, with the hope of making real the original promise of the advertising and media buying industry: \textit{targeting the right person, at the right time, and in the right context.}\\

Programmatic media buying has skyrocketed over the last five years. Although these figures can only be rough approximations, it is estimated that the total net advertising revenue linked to programmatic desktop display in Europe was around \EUR{2.9}bn in 2014. For programmatic mobile display and video display the figures were respectively \EUR{552}m and \EUR{205}m -- see \cite{iab}. Overall, the total growth in net advertising revenue related to programmatic media buying was around 70\% in Europe between 2013 and 2014. IAB Europe estimates in \cite{iab} that the percentage of revenue coming from programmatic media buying is a two-digit number for all formats: 39\% for desktop display, 27\% for mobile display, and 12\% for video display. In the US, the figures are even more staggering with \$5.89bn spent programmatically on desktop/laptop display, and \$4.44bn on mobile/tablet display, in 2014 (source: eMarketer.com).\\

One of the main and most exciting developments in programmatic media buying is Real-Time Bidding (or RTB). RTB is a new paradigm in the way digital inventory is purchased: advertisers\footnote{Mostly through intermediaries such as media agencies, but sometimes also by themselves when they set up trading desks.} can buy online inventory through real-time auctions for displaying a banner (or a short video). These real-time auctions make it possible for advertisers to target individual users on a per-access basis.\\

In a nutshell, each time a user visits a website, the publisher -- the supply side -- connects to a virtual marketplace, called an \textit{ad exchange}, in order to trigger an auction for each available slot that can be allocated to advertising. On the demand side, ad trading desks receive auction requests (sometimes through a Demand-Side Platform -- DSP), together with information about the user, the type of website, etc., and choose the bid level that best suits their strategy. Once the different bids are processed, the slot is attributed to the bidder who has proposed the highest bid and the price paid depends on the type of auction. The entire process, from the user entering the website to the display of the banner, takes around 100 milliseconds.\\

RTB auctions are usually of the Vickrey type, also known as ``second-price auctions''. In short, the mechanism is the following: first, the participants send  their bids in a sealed way, then, the item (here the slot) is sold to the participant who has proposed the highest bid, and the price paid by this participant corresponds to the second best bid (or to a minimum price if there is only one participant). Structurally, Vickrey auctions give participants an incentive to reveal their true valuation for the item -- see \cite{vickrey}.\\

The problem faced by ad-trading desks is to choose the optimal bid level each time they receive a request to participate in a Vickrey auction. Here, optimality may have different meanings, depending on the considered key performance indicator (KPI). In all cases, the complexity of the problem arises from the need of optimizing a functional depending on \textit{macroscopic} quantities at an hourly, daily or weekly timescale, by interacting with the system at the \textit{microscopic} scale of each auction, \emph{i.e.} through a high-frequency/low-latency bidding algorithm participating in thousands of auctions per second.
This multi-scale feature leads to the need of mathematical models that are both realistic and tractable, because numerical methods are often cumbersome and time-consuming in the case of multi-scale problems. In this article, we rely on methods coming from stochastic optimal control and we show that the optimal bidding strategy can be approximated very precisely (and almost in closed form) by using classical tools of convex optimization.\\

Besides the classical literature on Vickrey auctions (see for instance \cite{vickrey2,vickrey3,vickrey}) -- which is related to auction theory and more generally to game theory --, the academic literature on this new kind of problems is really scarce. General approaches for Real-Time Bidding optimization from a buyer's perspective can be found mostly in conference proceedings from the computer-science community (\emph{e.g.} \cite{lee,zhang}). Our approach is similar to the one presented in the work of Amin \emph{et al.} \cite{amin}: both are Markov Decision Process (MDP) approaches\footnote{Even though ours is in continuous time.} and dealing with similar auction problems. However, besides the originality of their model, Amin \emph{et al.} do not extend their mathematical development beyond the baseline discrete case. Another author introduced an MDP approach in the conference paper \cite{yuan}, but he focused on the problem from a publisher perspective. In general, the supply-side perspective has generated more academic research than the demand-side one (see Yuan's PhD dissertation \cite{yuan2} and the articles by Balseiro \emph{et al.} \cite{balseiro1,balseiro2}). A recent study of RTB auctions from a buyer's perspective is Stavrogianni's PhD dissertation \cite{stravrogiannis}.\\

Our stochastic optimal control approach is inspired by the academic literature in algorithmic trading \cite{hft1,hft2,hft3}, where, similarly to our problem, the goal is to optimize a macroscopic functional depending on the terminal state of the algorithm (\emph{e.g.} at the end of the day) by continuously making decisions on a high-frequency basis (\emph{i.e.} milliseconds). Moreover, like in high-frequency trading models involving limit orders, the algorithm should react to a system driven by one or several controlled Poisson processes.\\

In this paper -- the first of a series on Real-Time Bidding, see \cite{fglpricing} and \cite{fgllearning} --, we model by a marked Poisson process the sequence of auction requests received by an ad-trading desk: the Poisson process models the arrival times of the requests, and the marks correspond to independent random variables $(p_n)_{n \in \mathbb{N}^*}$ modeling the \textit{price to beat}, \emph{i.e.} the highest bid proposed by the other participants' in the auction.\footnote{In particular, by assuming that the best price of other participants is an exogenous variable, we ignore all game-theoretical effects, or, equivalently, all strategic interactions between market participants. Although we do not consider these aspects in the present paper, it would be very interesting to consider several agents -- or continuums of agents in a mean field game setup -- with different goals and to look for the Nash equilibriums in the repeated auction game at the heart of our model. This would be particularly relevant for auctions on small and specific parts of the audience.} Every time an auction is received, the algorithm sends a bid $b$ to the auction server. For the $n^{\text{th}}$ auction, the inventory is purchased by the algorithm if and only if the bid sent by the algorithm is greater than the price to beat $p_n$ (and in that case the price paid for the slot is $p_n$). The rationale for considering this \textit{statistical model}, rather than a more complicated game-theoretical one, comes from: (i) the large number of auction requests (several hundreds per second) for most segments of audience, and (ii) our assumption that the algorithm is restricted to an homogeneous subset of the inventory (\emph{i.e.} we assume that a segmentation of the different audiences and contexts has been carried out beforehand, or, in other words, that the problem we consider is at the tactical ``execution'' level -- see also \cite{f1,f2}).\\

In Section 2, we introduce the main notations of our modeling framework, and we focus on a stochastic optimal control problem where an ad trader aims at maximizing the total number of banners displayed, for a given spending (audience strategy). We exhibit the characterization of the optimal bidding strategy with a Hamilton-Jacobi-Bellman equation, and show that the optimal bidding strategy can be obtained in almost-closed form by using a fluid-limit approximation. Moreover, this approximation leads to a new characterization of the bidding strategy in the form of an optimal scheduling, which in practice can be tracked by a feedback-control mechanism. The main result is indeed that the budget should be spent evenly over the considered time window.\\

In Section 3, we propose several extensions of our model. In particular, we generalize the initial model by considering several sources and types of inventory, and we consider another objective function for taking account of a very important KPI: the number of conversions. In these extensions, as in the initial model with only one source of inventory, we find that the total budget should be spent evenly over the considered time window -- see \cite{fgllearning} for a different conclusion when on-line learning is considered. However, we find that the optimal bidding strategies are not uniform across sources, but instead proportional to an index which is, in the general case with multiple sources and conversions,  a simple function of (i) the \emph{a priori} interest for each source/type of inventory, (ii) the \emph{a priori} interest for each source of conversion, and (iii) the probability of conversion associated to each source. In Section 3, we also discuss the difference between first-price and second-price auctions, the impact of floor prices, and more general -- nonlinear -- objective criterions.

\section{A model for audience strategies}

In this section, we present a model where an ad trader wishes to spend a given amount of money $\bar{S}$ over a time window $[0,T]$, in order to display a maximum number of banners to a given population. He receives auction requests at random times from a single ad exchange -- and does not know in advance how many auction requests he will receive. The model we propose is naturally written in continuous time and the (random) occurrences of auction requests are modeled by the jumps of a Poisson process. A simplified discrete-time modeling framework is presented in the appendix for readers who are more used to discrete-time Markov decision processes.\\

\subsection{The modeling framework in continuous time}

Let us fix a probability space $(\Omega, \mathcal{F}, \mathbb{P})$ equipped with a filtration $(\mathcal{F}_t)_{t\in \mathbb{R}_+}$ satisfying
the usual conditions. We assume that all stochastic processes are defined on $(\Omega, \mathcal{F},(\mathcal{F}_t)_{t\in \mathbb{R}_+}, \mathbb{P})$.\\

\emph{Auctions:}\\

We consider an ad trader connected to an ad exchange.\footnote{The ad trader may be connected to several ad exchanges, but we assume in this first model that he does not make any difference between auction requests coming from the different ad exchanges.} He receives requests to participate in auctions in order to purchase inventory and display some banners to the specific population he wants to target. Requests are modeled with a marked Poisson process: the arrival of requests is triggered by the jumps of a Poisson process $(N_t)_t$ with intensity $\lambda > 0$,\footnote{In this paper, we only consider the case of a constant intensity $\lambda$. A time-varying auction-request intensity can be handled via a change of time, similar as in \cite{f2}, where the main idea is to switch from \textit{physical time} to \textit{trading time}.} and the marks $(p_n)_{n \in \mathbb{N}^*}$ correspond, for each auction, to the highest bid among the other participants' bids.\\

Every time he receives a request to participate in an auction, the ad trader can bid a price: at time $t$, if he receives a request, then we denote his bid by $b_t$. We assume that the ad trader stands ready to bid (possibly a bid equal to $0$ or $+\infty$) at all times. In particular, we assume that $(b_t)_t$ is a predictable process with values in $\mathbb{R}_+ \cup \{+\infty\}$.\\

If at time $t$ the $n^{\text{th}}$ auction occurs, the outcome of this auction is the following:
\begin{itemize}
  \item If $b_t > p_{n}$, then the ad trader wins the auction: he pays the price $p_{n}$ and his banner is displayed.
  \item If $b_t \le p_{n}$, then\footnote{The equality case is not considered separately because we assume that the distribution of the variables $(p_n)_n$ is absolutely continuous -- see below.} the ad trader does not win the auction. In particular, the trader's banner is not displayed and he pays nothing.
\end{itemize}

An important assumption of our model is that $(p_n)_{n \in \mathbb{N}^*}$ are \emph{i.i.d.} random variables distributed according to an absolutely continuous distribution. We denote by $F$ the associated cumulative distribution function and by $f$ the associated probability density function. Our assumptions are the following:
\begin{itemize}
  \item $\forall n\in \mathbb N^*$, $p_n$ is almost surely positive. In particular, $F(0) = 0$.
  %\item $\forall n\in \mathbb N^*$, $p_n \in L^2(\Omega)$.
  \item $\forall p > 0, f(p) > 0$.\footnote{The model can easily be (slightly) modified to cover the case of truncated functions $f$, \emph{i.e.} $f$ equal to 0 above a given price level.}
  \item $\lim_{p \to +\infty} p^3 f(p) = 0$.\\
\end{itemize}

\emph{Remaining cash process:}\\

We denote by $(S_t)_t$ the process modeling the amount of cash to be spent. Its dynamics is:\footnote{This process is well defined because $\forall n\in \mathbb{N}^*, p_n \in L^1(\Omega)$.}
\begin{equation}
\label{dynS}
dS_t = - p_{N_t} \mathbf{1}_{\{b_t> p_{N_t}\}}dN_t, \quad S_0 = \bar{S}.
\end{equation}

\emph{Inventory process:}\\

The number of impressions, \emph{i.e.} the number of banners displayed, is modeled by the inventory process $(I_t)_t$. Its dynamics is:
\begin{equation*}
dI_t = \mathbf{1}_{\{b_t> p_{N_t}\}}dN_t, \quad I_0 = 0.
\end{equation*}

\emph{Stochastic optimal control problem:}\\

In the model of this section, the trader aims at maximizing the expected number of banners displayed over $[0,T]$. We should impose not to spend more than $\bar{S}$, but, for technical reasons, we do prefer to consider a relaxed problem, and to penalize extra-spending. We consider therefore the maximization of the following criterion:
\begin{equation*}
\mathbb E \left[I_T - K \min\left(S_T,0\right)^2\right],
\end{equation*}
where $K$ measures the importance of the penalty for extra-spending.\footnote{In particular, we will consider the limit case $K \to +\infty$ as a way to see what happens when the maximal total amount to spend is imposed.}\\

For mathematical reasons, we do prefer to write the stochastic optimal control problem as a minimization problem:
\begin{equation*}
\inf_{(b_t)_t \in \mathcal{A}}\mathbb E \left[-I_T + K \min\left(S_T,0\right)^2\right],
\end{equation*}
where $\mathcal{A}$ is the set of predictable processes with values in $\mathbb{R}_+ \cup \{+\infty\}$.\\

\emph{Value function:}\\

To tackle this stochastic optimal control model, we introduce the value function
\begin{equation*}
u: (t,I,S) \in [0,T]\times \mathbb{N}\times (-\infty, \bar{S}] \mapsto  \inf_{(b_s)_{s\ge t} \in \mathcal{A}_{t}} \mathbb E\left[-I^{b,t,I}_T + K \min\left(S^{b,t,S}_T,0\right)^2\right],\end{equation*} where
$\mathcal{A}_{t}$ is the set of predictable processes on $[t,T]$ with values in $\mathbb{R}_+ \cup \{+\infty\}$, and
\begin{equation*}
dS^{b,t,S}_s = - p_{N_s} \mathbf{1}_{\{b_s> p_{N_s}\}}dN_s, \quad S^{b,t,S}_t = S,
\end{equation*}
\begin{equation*}
dI^{b,t,I}_s = \mathbf{1}_{\{b_s> p_{N_s}\}}dN_s, \quad I^{b,t,I}_t = I.
\end{equation*}

\subsection{Solution of the stochastic optimal control problem}

\subsubsection{A characterization with a non-standard HJB equation}

To solve the above stochastic optimal control problem, we introduce the associated Hamilton-Jacobi-Bellman (HJB) equation which corresponds to the continuous-time equivalent of the Bellman equation (A.2) derived in discrete time from the dynamic programming principle -- see the appendix. Here the HJB equation is:\footnote{For not making this paper a too technical one, we focus on applications and not on the mathematical analysis of the integro-differential Bellman equations we derive. The interested reader can derive existence and uniqueness results by using for instance the advanced viscosity techniques presented in \cite{bi}. It is noteworthy that we are going to approximate the problem by its fluid limit for which the classical approach with weak semi-concave solutions applies -- see \cite{can,evans}.}

\begin{equation}
\label{m1:HJB}
-\partial_t u(t,I,S) - \lambda \inf_{b \in \mathbb R_+}\int_0^b f(p) (u(t,I+1,S-p) - u(t,I,S)) dp = 0,
\end{equation}
with terminal condition $u(T,I,S) = - I + K \min\left(S,0\right)^2$.\\

Eq. (\ref{m1:HJB}) is a non-standard integro-differential HJB equation -- which is similar to Eq. (A.2). Because the objective function is affine in the variable $I$, the dimensionality of the problem can be reduced by considering the ansatz $u(t,I,S) = - I + v(t,S)$.\\

With this ansatz, Eq. (\ref{m1:HJB}) becomes indeed:
\begin{equation}
\label{m1:HJB2}
-\partial_t v(t,S) - \lambda \inf_{b \in \mathbb R_+} \int_0^b f(p) (v(t,S-p) - v(t,S)-1) dp = 0,
\end{equation}
with terminal condition $v(T,S) = K \min\left(S,0\right)^2$.\footnote{Equivalently, Eq. (\ref{m1:HJB2}) can be written $$-\partial_t v(t,S) - \lambda \inf_{b \in \mathbb R_+} \left(-F(b) + \int_0^b f(p) (v(t,S-p) - v(t,S)) dp\right) = 0.$$ }\\

Eq. (\ref{m1:HJB2}) is another non-standard integro-differential Hamilton-Jacobi-Bellman, but with only one spatial dimension instead of two. It is straightforward to verify that it corresponds to the following stochastic optimal control problem:
\begin{equation}
\label{m1:obj2}
\inf_{(b_t)_t \in \mathcal{A}}\mathbb E \left[-\lambda \int_0^T F(b_t) dt + K \min\left(S_T,0\right)^2\right],
\end{equation}
where $\mathcal{A}$ is the set of predictable processes with values in $\mathbb{R}_+ \cup \{+\infty\}$.\\

The change of variables $u(t,I,S) = - I + v(t,S)$ is key to reduce the dimensionality of the problem. For understanding the underlying rationale, let us notice that $-u(t,I,S)$ is (up to the penalization term)  the number of impressions an ad trader should expect to purchase over the time interval $[0,T]$, if, at time $t$, he has already purchased a number of impressions equal to $I$, and has an amount $S$ to be spent optimally over $[t,T]$. Because the number of impressions that will be purchased over $[t,T]$ does not depend on the number of impressions already purchased, except through the budget that has been spent -- or, equivalently, that remains to be spent --, the quantity $-u(t,I,S) - I$ should be independent of $I$. Hence the ansatz $u(t,I,S) = - I + v(t,S)$. In particular, the quantity $-v(t,S)$ represents (up to the penalization term) the number of impressions an ad trader should expect to purchase over the time interval $[t,T]$, if he has, at time $t$, an amount $S$ to be spent optimally over $[t,T]$.\\

\subsubsection{The reasons why we need to go beyond Eq. (\ref{m1:HJB2})}

From the previous paragraphs, one could consider that the problem is almost entirely solved. Because Eq. (\ref{m1:HJB2}) characterizes the optimal bidding strategy, finding the optimal bid in each state $(t,S)$ of the system simply boils down indeed to implementing a backward (monotone) numerical scheme to approximate the solution of Eq. (\ref{m1:HJB2}).\\

However, it is noteworthy that the problem faced by the ad trader is a two-scale one:
\begin{itemize}
\item The macroscopic scale is the tactical scale.\footnote{This scale may instead be regarded as mesoscopic, if we consider that the advertising campaign constitutes the macroscopic scale.} This scale is related to the time horizon~$T$ and to the amount of cash $\bar{S}$ the ad trader has to spend over the time interval $[0,T]$. In practice, for intraday tactics, the order of magnitude is in the range $10^{2}-10^{4}$ s  (seconds) for $T$, and in the range \EUR{$10^{1}-10^{4}$} for $\bar{S}$.
\item The microscopic scale is the scale of each auction, related to the intensity $\lambda$, and to the variables $b$ and $p$ (in the integral term of Eq. (\ref{m1:HJB2})). The order of magnitude for these variables is around $10^3$ s$^{-1}$ for $\lambda$, and in the range \EUR{$10^{-5}-10^{-2}$} for each bid.\footnote{These values may seem quite small at first reading. We remind the reader that the industry standard is to measure bid levels and prices for bulks of one thousand impressions (CPM, or cost-per-mille). For instance, if for a given auction a practitioner talks about a bid of \EUR{$1$}, then in reality that is \EUR{$10^{-3}$}.}
\end{itemize}

If one wants to approximate the solution of Eq. (\ref{m1:HJB2}) on a grid in $(t,S)$, the time step $\Delta t$ and the cash/spatial step $\Delta S$ should be compatible with the microscopic scale. To take account of the fast arrival of auction requests, a natural time step is $\Delta t$ around $10^{-4}$ second. As far as the cash to spend is concerned, if one wants to be precise when computing the optimal bid for each auction, a natural value for $\Delta S$ is around \EUR{$10^{-7}$}, given the previously discussed orders of magnitude for the bids. Given the orders of magnitude considered at the macroscopic scale, we need to consider a numerical scheme on a grid with $10^6-10^8$ points in time and $10^{8}-10^{11}$ points in space, which is computationally extremely costly.\footnote{It is not an issue that the numerical solution of Eq. (\ref{m1:HJB2}) takes far more time (seconds, minutes, ...) to be computed than the maximal time authorized to send a bid to the auction servers (milliseconds). In practice, the solution can be computed in advance, and then be embedded in an in-memory look-up table allowing low-latency requests.} Even though smart numerical methods can be built to avoid computations at each point of the grid, the multi-scale nature of the problem is an important issue in practice.\footnote{This problem is even more critical in the case of the on-line estimation of the parameters -- see \cite{fgllearning}.}  \\

A way to avoid using a computer-intensive numerical method is to look for almost-closed-form approximations. As we will show below, such an approximation can be found by using a first-order Taylor expansion in the integral term of Eq. (\ref{m1:HJB2}).

\subsection{Almost-closed-form approximation of the solution}

In this subsection, we aim at approximating the value function $v$ and the resulting optimal control function $(t,S) \mapsto b^*(t,S)$ corresponding to the optimal bidding strategy -- the latter is characterized  either by \begin{equation}
\label{sol}v(t,S_{t-} - b^*(t,S_{t-})) = v(t,S_{t-}) + 1,\end{equation} if this equation has a solution, or by $b^*(t,S_{t-}) = +\infty$ otherwise, \emph{i.e.} if we are in the case where $\lim_{S \to -\infty} v(t,S) \le v(t,S_{t-}) + 1$.\\

The main idea underlying the approximation we obtain in the following paragraphs is to replace the term $ v(t,S-p) - v(t,S)$ in  Eq. (\ref{m1:HJB2}) by its first-order Taylor expansion $-p \partial_S v(t,S)$. We therefore replace Eq. (\ref{m1:HJB2}) by:

\begin{equation}
\label{m1:HJBc}-\partial_t v(t,S) - \lambda \inf_{b \in \mathbb R_+} - \int_0^b f(p)\left(1+ p \partial_S v(t,S)\right) dp = 0.
\end{equation}
with terminal condition $v(T,S) = K \min\left(S,0\right)^2$.\\

Intuitively, this approximation is relevant because of the multi-scale nature of the problem: the range of values for $b$ should be several orders of magnitude smaller than the values taken by $S$.\footnote{This point will be discussed below.}

\subsubsection{A fluid-limit approximation}

In the following paragraphs, we show that Eq. (\ref{m1:HJBc}) is the Hamilton-Jacobi equation associated with a variational problem which can be regarded as the fluid limit of the stochastic optimal control problem we considered in the previous subsection.\\

For that purpose, let us introduce the following optimization problem:
\begin{equation*}
\inf_{(\tilde{b}_t)_t \in \mathcal{A}_{\text{det}}} -\lambda \int_0^T F(\tilde{b}_t) dt + K \min\left(\tilde{S}^{\tilde{b}}_T,0\right)^2,
\end{equation*}
where $$d\tilde{S}^{\tilde{b}}_t = - \lambda G(\tilde{b}_t)dt, \quad G: x \in \mathbb{R}_+ \cup \{+\infty\} \mapsto \int_0^{x} p f(p) dp,$$ and where $\mathcal{A}_{\text{det}}$ is the set of $\mathcal{F}_0$-measurable processes with values in $\mathbb{R}_+ \cup \{+\infty\}$.\\

The value function $\tilde{v}$ associated with this problem is defined as:
\begin{equation*}
\tilde{v}(t,S) = \inf_{(\tilde{b}_s)_{s\ge t} \in \mathcal{A}_{\text{det}}} -\lambda \int_t^T F(\tilde{b}_s) ds + K \min\left(\tilde{S}^{\tilde{b},t,S}_T,0\right)^2,
\end{equation*}
where $$d\tilde{S}^{\tilde{b},t,S}_s = - \lambda G(\tilde{b}_s)ds, \quad \tilde{S}^{\tilde{b},t,S}_t = S.$$

We have the following Theorem:

\begin{Theorem}
\label{theo}
Let us define
\begin{eqnarray*}
% \nonumber to remove numbering (before each equation)
 H(x) &=& \lambda \sup_{b \in \mathbb R_+} \int_0^b f(p)\left(1+ xp\right) dp.\\
  &=& \left\{
                    \begin{array}{ll}
                      - \lambda x \int_0^{-\frac{1}{x}} F(p) dp, & x <0 \\
                      \lambda\left(1+x\int_0^{\infty} pf(p) dp\right), & x \ge 0.
                    \end{array}
                  \right.\\
\end{eqnarray*}
The value function $\tilde{v}$ is given by:
$$\tilde{v}(t,S) = \sup_{x \le 0} \left(Sx - (T-t) H(x) - \frac{x^2}{4K}\right).$$
It is the unique weak semi-concave solution of the Hamilton-Jacobi equation (\ref{m1:HJBc}).\\

Furthermore, the optimal control function $(t,S) \mapsto \tilde{b}^*(t,S)$ is given by the following:
\begin{itemize}
  \item If $S \ge  \lambda (T-t) \int_0^{\infty} pf(p) dp$, then $\tilde{b}^*(t,S) = +\infty$.
  \item If $S <  \lambda (T-t) \int_0^{\infty} pf(p) dp$, then $\tilde{b}^*(t,S) = -\frac{1}{x^*}$, where $x^*$ is characterized by \begin{equation}
\label{m1:characy}
S = (T-t) H'(x^*) + \frac{x^*}{2K}.
\end{equation}
\end{itemize}
\end{Theorem}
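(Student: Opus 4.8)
The plan is to prove this theorem in three movements: first verify the explicit formula for $H$, then confirm that the proposed $\tilde v$ solves the variational problem via a Legendre-transform/duality argument, and finally identify the optimal feedback control and show it coincides with the Hamilton-Jacobi equation \reff{m1:HJBc}.

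First I would compute $H$ explicitly. Since $H(x) = \lambda \sup_{b\in\R_+}\int_0^b f(p)(1+xp)\,dp$, the integrand $f(p)(1+xp)$ changes sign precisely where $1+xp=0$. When $x\ge 0$ the integrand is nonnegative for all $p>0$, so the supremum is attained at $b=+\infty$, giving $\lambda(1+x\int_0^\infty pf(p)\,dp)$ (finite by the assumption $\lim_{p\to\infty}p^3 f(p)=0$, which guarantees the first moment exists). When $x<0$ the optimal $b$ is $-1/x$, the point where the integrand vanishes; substituting and integrating by parts (using $F(0)=0$) turns $\int_0^{-1/x} f(p)(1+xp)\,dp$ into $-x\int_0^{-1/x}F(p)\,dp$, which yields the stated expression. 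I would record here that $H$ is convex, that $H'(x)=\lambda\int_0^{-1/x}pf(p)\,dp$ for $x<0$ (so $H'$ ranges over $(0,\lambda\int_0^\infty pf(p)\,dp)$ as $x$ runs through $(-\infty,0)$), and that $H$ is $C^1$ across $x=0$; these facts drive the rest of the argument.

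Next I would establish the formula for $\tilde v$. The clean route is dynamic duality: because the running cost $-\lambda F(\tilde b_t)$ and the drift $-\lambda G(\tilde b_t)$ both depend on the control only through $\tilde b_t$, I introduce the partial Legendre transform in the bid. For a deterministic control the terminal budget is $\tilde S_T = S - \lambda\int_t^T G(\tilde b_s)\,ds$, and the objective is $-\lambda\int_t^T F(\tilde b_s)\,ds + K\min(\tilde S_T,0)^2$. The key observation is that $H$ is exactly the convex conjugate pairing: for each fixed slope $x\le 0$ one has $-\lambda F(b) - x\lambda G(b) \ge -H(x)$ for all $b$, with equality at the optimizing bid. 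Introducing the multiplier $x$ for the terminal budget constraint and using $K\min(S_T,0)^2 = \sup_{x\le 0}(xS_T - \tfrac{x^2}{4K})$ (the Legendre transform of the one-sided quadratic, valid for $x\le 0$), I can interchange the supremum over $x$ with the infimum over controls. The control-dependent part collapses, via the pointwise inequality above, to $-(T-t)H(x)$ after integrating the constant-in-time optimal bid over $[t,T]$, leaving $\tilde v(t,S)=\sup_{x\le 0}(Sx-(T-t)H(x)-\tfrac{x^2}{4K})$. I would check the optimal bid is indeed time-independent (the inner problem is autonomous), which legitimizes the factor $(T-t)$.

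Finally I would extract the optimal control and verify the PDE. Differentiating the concave objective in $x$ gives the first-order condition $S = (T-t)H'(x^*) + \tfrac{x^*}{2K}$, which is \reff{m1:characy}; the constraint $x^*\le 0$ is active exactly when $S\ge \lambda(T-t)\int_0^\infty pf(p)\,dp$ (the boundary value $H'(0^-)$), giving the dichotomy in the statement, with $\tilde b^* = -1/x^*$ recovered from the inner optimizer and $\tilde b^*=+\infty$ on the unconstrained branch. That $\tilde v$ is the unique weak semi-concave solution of \reff{m1:HJBc} follows because \reff{m1:HJBc} reads $-\partial_t v + H(\partial_S v)=0$ with $v(T,S)=K\min(S,0)^2$, and the formula for $\tilde v$ is precisely the Hopf–Lax/Hopf formula for this Hamilton-Jacobi equation with convex Hamiltonian $H$; I would invoke the standard theory (\cite{can,evans}) for existence, uniqueness, and semi-concavity. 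The main obstacle I anticipate is justifying the min-max interchange and handling the $+\infty$ bid on the boundary $x=0$ rigorously — in particular confirming the supremum over $x\le 0$ is attained (or approached) and that the one-sided Legendre identity for the penalty is tight only on $x\le0$, which is what forces the constraint $x\le 0$ in the variational formula rather than an unconstrained sup over all of $\R$.
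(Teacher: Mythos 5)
Your proposal is correct and reaches the same three conclusions (the formula for $H$, the dual formula for $\tilde{v}$, the feedback control and the identification with \reff{m1:HJBc}), but it derives the central formula for $\tilde{v}$ by a genuinely different route. The paper first changes variables, $a_s = \lambda G(\tilde{b}_s)$, turning the problem into a calculus-of-variations problem with strictly convex Lagrangian $L(a) = -\lambda F\left(G^{-1}(a/\lambda)\right)$, then invokes the Hopf--Lax formula \reff{m1:hl} and dualizes by infimal convolution to get $\sup_{x \le 0}\left(Sx - (T-t)H(x) - \frac{x^2}{4K}\right)$; your pointwise inequality $-\lambda F(b) - x\lambda G(b) \ge -H(x)$ is exactly the Fenchel--Young inequality for the paper's identity $L^* = H$, so the convex-analytic core is shared, but you run Lagrangian duality directly on the bidding problem (conjugating the penalty via $K\min(y,0)^2 = \sup_{x\le 0}(xy - \frac{x^2}{4K})$ and swapping $\inf$ over controls with $\sup$ over $x$) instead of passing through a variational reformulation. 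What your route buys is a self-contained derivation of the value formula that never needs Hopf--Lax; what the paper's route buys is that the convexification and the interchange come for free from standard variational theory. The gap you flag is real but benign: weak duality only gives $\tilde{v} \ge \sup_{x\le 0}(\cdots)$, and the integrand is not convex in $b$ itself (only in $a = \lambda G(b)$), so Sion-type minimax theorems do not apply off the shelf; however, the interchange closes by direct verification rather than by any abstract theorem. Take the dual maximizer $x^*$ (which exists and is unique by strict concavity and coercivity of $x \mapsto Sx - (T-t)H(x) - \frac{x^2}{4K}$ on $\mathbb{R}_-$) and play the constant control $\tilde{b} \equiv -1/x^*$ (or $+\infty$ if $x^*=0$): then $\tilde{S}_T = S - (T-t)\lambda G(-1/x^*) = S - (T-t)H'(x^*) = \frac{x^*}{2K}$ by \reff{m1:characy}, which is precisely the point where the conjugate of the penalty is attained, while $b = -1/x^*$ attains the supremum defining $H(x^*)$; both Fenchel--Young inequalities are tight, the duality gap vanishes, and the formula follows. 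For the PDE statement your proof and the paper's coincide: both read \reff{m1:HJBc} as $-\partial_t v + H(\partial_S v) = 0$ with convex terminal data and appeal to \cite{can,evans} for uniqueness of the weak semi-concave solution given by the Hopf-type representation.
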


\begin{proof}

By using the change of variables $a_s = \lambda G(\tilde{b}_s) \in \mathcal{I} = [0,\lambda \int_0^\infty p f(p) dp]$, we have:

\begin{equation}
\label{lfa}
\tilde{v}(t,S) = \inf_{(a_s)_{s\ge t} \in \mathcal{A}'_{\text{det}}} \int_t^T L\left(a_s\right) ds + K \min\left(\widehat{S}^{a,t,S}_T,0\right)^2,
\end{equation}
where $\mathcal{A}'_{\text{det}}$ is the set of $\mathcal{F}_0$-measurable processes with values in $\mathcal{I}$, where $$d\widehat{S}^{a,t,S}_s = -a_s ds, \quad \widehat{S}^{a,t,S}_t = S,$$ and where the function $L$ is defined by:
  $$L : a \in \mathcal{I}  \mapsto -\lambda F\left(G^{-1}\left(\frac{a}{\lambda}\right)\right).$$

$L$ is continuously differentiable on the interior of $\mathcal{I}$, with:
\begin{eqnarray*}
% \nonumber to remove numbering (before each equation)
  L'(a) &=& \left(G^{-1}\right)'\left(\frac{a}{\lambda}\right) F'\left(G^{-1}\left(\frac{a}{\lambda}\right)\right)\\
   &=& -\frac{f\left(G^{-1}\left(\frac{a}{\lambda}\right)\right)}{G'\left(G^{-1}\left(\frac{a}{\lambda}\right)\right)}\\
   &=& -\frac{1}{G^{-1}\left(\frac{a}{\lambda}\right)}.\\
\end{eqnarray*}

In particular, since $G$ is an increasing function, $L'$ is increasing, and, therefore, $L$ is strictly convex.\\

Let us now compute the Legendre-Fenchel transform of $L$:
\begin{eqnarray}
\nonumber L^*(x) &=& \sup_{a \in \mathcal{I} } x a - L(a)\\
\nonumber &=& \sup_{b \in \mathbb{R}_+ } \lambda x G(b) + \lambda F(b)\\
\label{m1:supb}&=& \lambda \sup_{b \in \mathbb{R}_+ } \int_0^b  (1+ xp) f(p) dp\\
\nonumber &=& H(x).
\end{eqnarray}

If $x < 0$, then the first order condition associated with the supremum in Eq. (\ref{m1:supb}) is $f(b^*) \left(1+ b^*x\right)  = 0$. In other words:

$$x < 0 \Rightarrow b^* = - \frac{1}{x}.$$

If $x$ is nonnegative, then $b^* = +\infty$.\\

In the former case, we can write (by using an integration by parts):
\begin{eqnarray*}
% \nonumber to remove numbering (before each equation)
  H(x) &=& \lambda \sup_{b \in \mathbb R_+} \int_0^b \left(1+ xp\right) f(p) dp\\
   &=&  \lambda \int_0^{b^*} f(p)\left(1+ xp\right) dp  \\
   &=& -\lambda x \int_0^{-\frac 1x} F(p) dp. \\
\end{eqnarray*}

The Legendre-Fenchel transform of $L$ is therefore $H$, which can indeed be written as:
$$H(x) =\left\{
                    \begin{array}{ll}
                      - \lambda x \int_0^{-\frac{1}{x}} F(p) dp, & x <0 \\
                      \lambda\left(1+x\int_0^{\infty} pf(p) dp\right), & x \ge 0.
                    \end{array}
                  \right.$$

It is straightforward to check that $H$ is a twice continuously differentiable convex function.\footnote{Straightforward computations give:
$$H'(x) =\left\{
                    \begin{array}{ll}
                      \lambda \int_0^{-\frac{1}{x}} pf(p) dp = \lambda G\left(-\frac{1}{x}\right), & x <0 \\
                      \lambda \int_0^{\infty} pf(p) dp, & x \ge 0.\\
                    \end{array}
                  \right.$$

In particular, $H'(0) = \lambda \int_0^{\infty} pf(p) dp$.\\

Because $\lim_{p \to +\infty} p^3 f(p) = 0$, the function $H'$ is continuously differentiable with:
$$H''(x) =\left\{
                    \begin{array}{ll}
                       -\lambda \frac{1}{x^3} f\left(-\frac 1x\right), & x <0 \\
                      0, & x \ge 0.
                    \end{array}
                  \right.$$
 }\\

We know, from classical variational calculus, that $\tilde{v}$ is given by the Hopf-Lax formula:
\begin{eqnarray}
% \nonumber to remove numbering (before each equation)
\label{m1:hl}  \tilde{v}(t,S) &=&  \inf_{y \in [S -\lambda \int_0^\infty pf(p) dp(T-t),S]} \left((T-t) L\left(\frac{S-y}{T-t}\right) + K\min(y,0)^2\right).
\end{eqnarray}

By using an infimal convolution, we can transform Eq. (\ref{m1:hl}) into:
\begin{eqnarray}
\nonumber \tilde{v}(t,S) &=& \sup_{x} \left(Sx - (T-t) H(x) - \sup_y \left(yx - K\min(y,0)^2\right)\right)\\
\nonumber &=& \sup_{x} \left(Sx - (T-t) H(x) - \chi_{\mathbb R_-}(x) - \frac{x^2}{4K}\right)\\
\label{m1:vcff} &=& \sup_{x \le 0} \left(Sx - (T-t) H(x) - \frac{x^2}{4K}\right).
\end{eqnarray}

This is the first result of the Theorem.\\

Furthermore, we know that it is the unique weak semi-concave solution of the following Hamilton-Jacobi equation:
\begin{equation*}
-\partial_t \tilde{v}(t,S) + H(\partial_S \tilde{v}(t,S)) = 0,
\end{equation*}
with terminal condition $\tilde{v}(T,S) = K\min(S,0)^2$.\\

This equation is exactly the same as Eq. (\ref{m1:HJBc}), hence the second assertion of the Theorem.\\

Moreover the optimal control in the modified problem (\ref{lfa}) is given by $$a^*(t,S) = H'(\partial_S \tilde{v}(t,S)).$$

Therefore, the optimal control function in the initial problem is:
\begin{equation*}
\tilde{b}^*(t,S) = \left\{
                    \begin{array}{ll}
                       -\frac{1}{\partial_S \tilde{v}(t,S)}, & \partial_S \tilde{v}(t,S) <0 \\
                      +\infty, & \partial_S \tilde{v}(t,S) \ge 0.
                    \end{array}
                  \right.
\end{equation*}

If $S \ge (T-t) H'(0) = \lambda (T-t)  \int_0^{\infty} pf(p) dp$, then the supremum in Eq. (\ref{m1:vcff}) is reached at $x^*=0$. Therefore, $\partial_S \tilde{v}(t,S) = 0$ and $\tilde{b}^*(t,S) = +\infty$.\\

Otherwise, the supremum in Eq. (\ref{m1:vcff}) is reached at $x^*\le 0$ uniquely characterized by:
\begin{equation*}
S = (T-t) H'(x^*) + \frac{x^*}{2K}.
\end{equation*}

In particular, $\partial_S \tilde{v}(t,S) = x^*$ and $\tilde{b}^*(t,S) = -\frac{1}{x^*}$ where $x^*$ is characterized by Eq. (\ref{m1:characy}).\\\qed\\
\end{proof}

\subsubsection{The approximation of the optimal bidding strategy and its interpretation}

Theorem \ref{theo} invites to use $\tilde{v}$ as an approximation for $v$. Then the optimal bidding strategy $(b^*_t)_t$ can be approximated either by $\tilde{b}^*(t,S_{t-})$, or by solving Eq. (\ref{sol}) in which $v$ is replaced by $\tilde{v}$. In what follows, we consider the former approach. Furthermore, we consider the limit case where $K\to +\infty$. Subsequently, we approximate the optimal bid $b^*_t$ at time $t$ by $\tilde{b}^*_\infty(t,S_{t-})$, where the function $\tilde{b}^*_\infty$ is defined by:\\
\begin{itemize}
  \item If $S \ge  \lambda (T-t) \int_0^{\infty} pf(p) dp$, then $\tilde{b}^*_\infty(t,S) = +\infty$.
  \item If $S <  \lambda (T-t) \int_0^{\infty} pf(p) dp$, then $\tilde{b}^*_\infty(t,S)$ is given by: $$\tilde{b}^*_\infty(t,S) = - \frac{1}{{H'}^{-1}\left(\frac{S}{T-t}\right)}.$$
\end{itemize}

In order to understand the above results, it is worth recalling that $\int_0^{\infty} pf(p) dp$ is the average price to beat, and $\lambda (T-t)$ is the expected number of auctions the ad trader will participate in between the time $t$ and the terminal time $T$. Therefore, $\lambda (T-t)  \int_0^{\infty} pf(p) dp$ is the expected amount of cash the ad trader will spend if he bids an infinite price in order to win all the auctions for which he receives a request. It is therefore natural, in our risk-neutral setting, to bid $\tilde{b}^*_\infty(t,S_{t-}) = +\infty$ at time $t$ if the remaining amount to spend $S_{t-}$ is greater than $\lambda (T-t)  \int_0^{\infty} pf(p) dp$.\footnote{Considering very large values of $b$ may seem to go against the fluid-limit approximation. However, this is not really the case because the price eventually paid for each auction won remains really small compared to the budget $S$, except perhaps near the terminal time $T$.}\\

If $S_{t-} <  \lambda (T-t) \int_0^{\infty} pf(p) dp$, then the approximation $\tilde{b}^*_\infty(t,S_{t-})$ of the optimal bid $b^*_t$ verifies:

$$\lambda (T-t) \int_0^{\tilde{b}^*_\infty(t,S_{t-})} pf(p) dp = S_{t-}.$$

To understand this bidding strategy, let us consider the situation from time $t$, in the interesting case, \emph{i.e.} when $S_{t-} <  \lambda (T-t) \int_0^{\infty} pf(p) dp$. If we use the above approximation of the optimal bidding strategy, the dynamics of the remaining cash is (for $0 \le t \le s \le T$):

\begin{equation*}
dS_s = - p_{N_s} \mathbf{1}_{\{\tilde{b}^*_\infty(s,S_{s-})> p_{N_s}\}}dN_s, \qquad S_t = S.
\end{equation*}

Therefore:

$$d\mathbb{E}[S_s] = - \mathbb{E}\left[\lambda\int_0^{\tilde{b}^*_\infty(s,S_{s-})} p f(p) dp\right]ds = - \frac{\mathbb{E}[S_{s}]}{T-s} ds.$$

This gives $\mathbb{E}[S_s] = S \frac{T-s}{T-t}$. In other words, the approximation of the optimal bidding strategy is such that, on average, the remaining cash is spent evenly across $[t,T]$.\footnote{In the case of a time-varying arrival rate of auction requests $\lambda_t$, a similar variational problem, solved in~\cite{f2}, shows that the optimal strategy is to spend proportionally to $\lambda_t$.} \\

One of the main consequences of the previous analysis is that optimality is not only characterized by the optimal bid, but also by the speed at which the budget is spent. The latter is very helpful in terms of implementation. Indeed, defining the solution by an \textit{optimal scheduling curve} (also known as budget-pacing, see \cite{f2}) makes it possible to dynamically control the bid by a feedback-control system tracking a target curve which does not depend on the form of the function $f$. With this point of view, the exact knowledge of the optimal bidding function is no more necessary. However, this analysis is based on the fluid-limit approximation. Subsequently, it is valid as far as the number of auction requests is large enough. For audience strategies focusing on small subsets of the inventory (\emph{e.g.} targeting only a few customers, restricting the strategy to \textit{premium inventory} only, purchasing slots for a very specific banner format, etc.), a numerical approximation of the solution of the initial stochastic control problem is needed.\footnote{In that case, the size of the grid can be reduced, because $\lambda$ is small.}\\

\subsection{Numerical examples}
\label{num}
We present a numerical example for an algorithm spending \EUR{$1$} during a period of 100 seconds. Due to the high-frequency nature of the algorithm, considering a larger time frame and a larger budget does not make an important difference from a mathematical standpoint. Notice that the spending rate ($\sim$ \EUR{$500$}/day) -- even though it may seem small given the sizes of overall advertizing budgets -- is realistic in the light of our problem: we are focusing on an algorithm which is already restricted to a particular channel, audience, and subset of inventory. In practice, ad trading desks dynamically allocate the overall budget throughout the day across a large array of algorithms, such as the one presented here.\footnote{The optimal allocation of the overall budget across several bidding algorithms goes beyond the scope of this work. However, we can see the extensions of Section 3 as a way of addressing this issue. A natural alternative is to use multi-armed bandit algorithms such as those proposed in \cite{pa1,pa2,pa3}.}

\subsubsection{Numerical approximations of $v$ and $b^*$}

For our numerical experiments, we assume that the distribution of the price to beat is an exponential distribution\footnote{A Pareto distribution or an extreme-value distribution are also relevant choices.} with parameter $\mu$, \emph{i.e.} $f(p) = \mu e^{-\mu p}$. We chose $\mu = 2\cdot 10^{3}$~\EUR$^{-1}$, which is consistent with a CPM of the order of \EUR $0.5$.\\

We represent in Figure \ref{fig:valuef} a numerical approximation of the solution $v$ of the HJB equation~\eqref{m1:HJB2}.\footnote{For approximating numerically the solution $v$ of the HJB equation~\eqref{m1:HJB2}, we consider a backward-in-time explicit scheme on a $(t,S)$ grid. The minimizer $b^*$ at each point of the grid is found by using the first order condition (\ref{sol}) (and an affine approximation between successive points). The integral is then approximated by a closed form formula by using an affine approximation of $v$ between successive points of the grid and the exponential form of $f$.} As stated above, the quantity $-v(t,S)$ can be interpreted (up to the penalty term) as the expected number of impressions that will be purchased from time $t$ by an algorithm following the optimal strategy, when the remaining budget at time $t$ is $S$.\\

\begin{figure}[h!]
	\centering
		\includegraphics[width = 11cm]{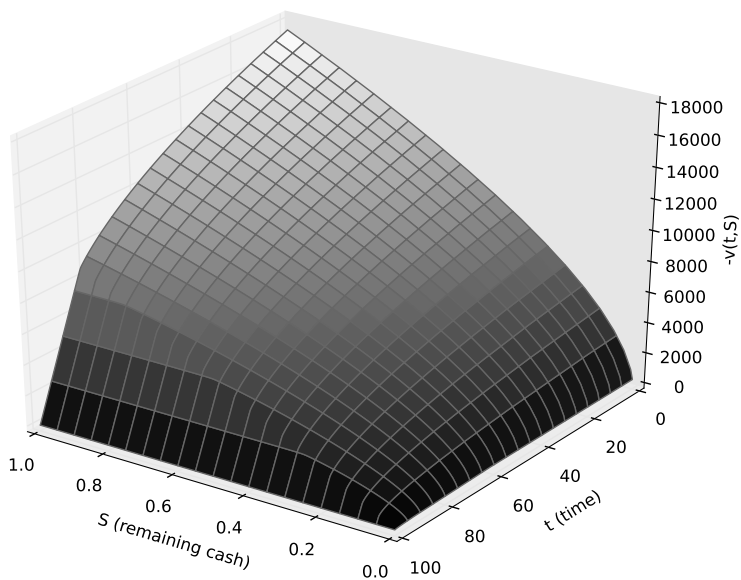}
	\caption{Solution of the HJB equation \eqref{m1:HJB2}. $\lambda= 500 \text{ s}^{-1}$, $T=100 \text{ s}$ and $\bar{S} =$ \EUR{$1$
	}.}
\label{fig:valuef}
\end{figure}

When solving the HJB equation \eqref{m1:HJB2}, the most important quantity to compute is the optimal control, \emph{i.e.} the optimal bid function $(t,S) \mapsto b^*(t,S)$. This function is plotted in Figure \ref{fig:logbid}. Because the values of the optimal bid increase abruptly as $t\rightarrow T$, we prefer to plot the logarithm in basis $10$ of $b^*(t,S)$.\\

\begin{figure}[h!]
	\centering
		\includegraphics[width = 12cm]{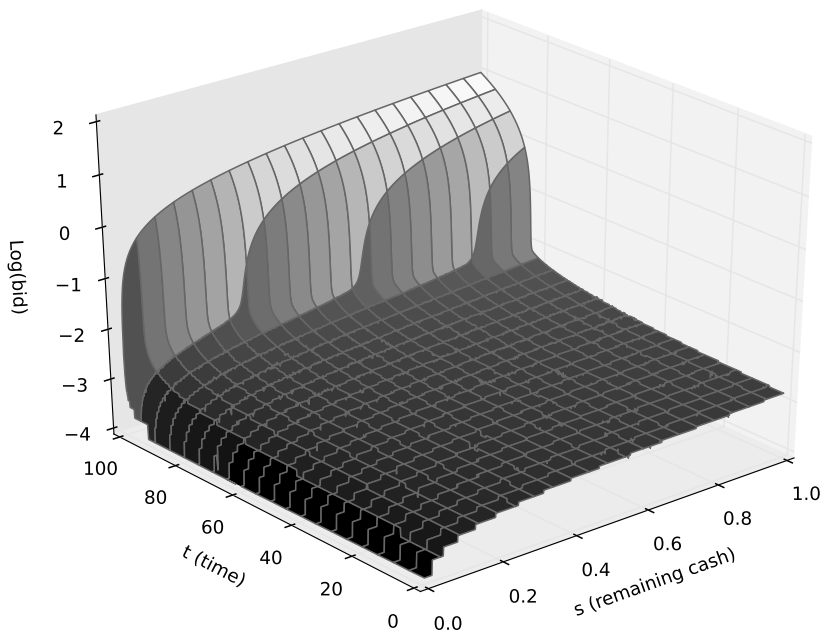}
	\caption{$\log_{10}(b^*(t,S))$ (the unit of $b^*$ is one thousandth of a euro). $\lambda= 500 \text{ s}^{-1}$, $T=100 \text{ s}$ and $\bar{S} =$ \EUR{$1$
	}.}\label{fig:logbid}
\end{figure}

The optimal bids plotted in Figure \ref{fig:logbid} are consistent with intuition: the higher the remaining budget and the closer to the terminal time, the higher the optimal bid.

\subsubsection{Simulations}

One of the consequences of the fluid-limit approximation is that the average spending rate should be constant. An important question is whether or not this feature is also true when one uses the optimal bidding strategy obtained numerically by solving the HJB equation~\eqref{m1:HJB2}. To answer this question, we carried out simulations by drawing the times at which auction requests occur and the values of the price to beat for each auction (using the same set of parameters as above).\\

\begin{figure}[h!]
	\centering
		\includegraphics[width = 7.1cm]{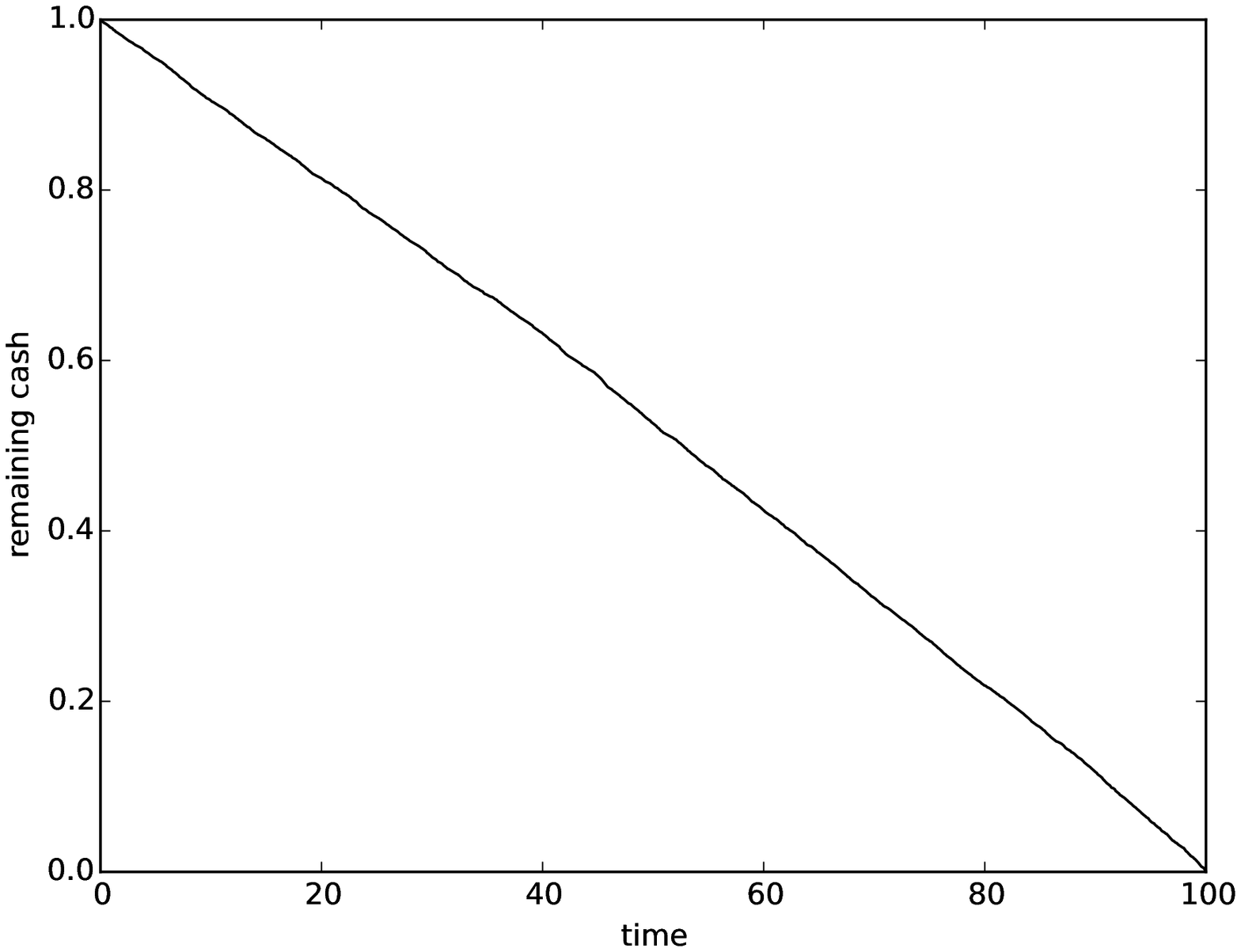}\includegraphics[width = 7.1cm]{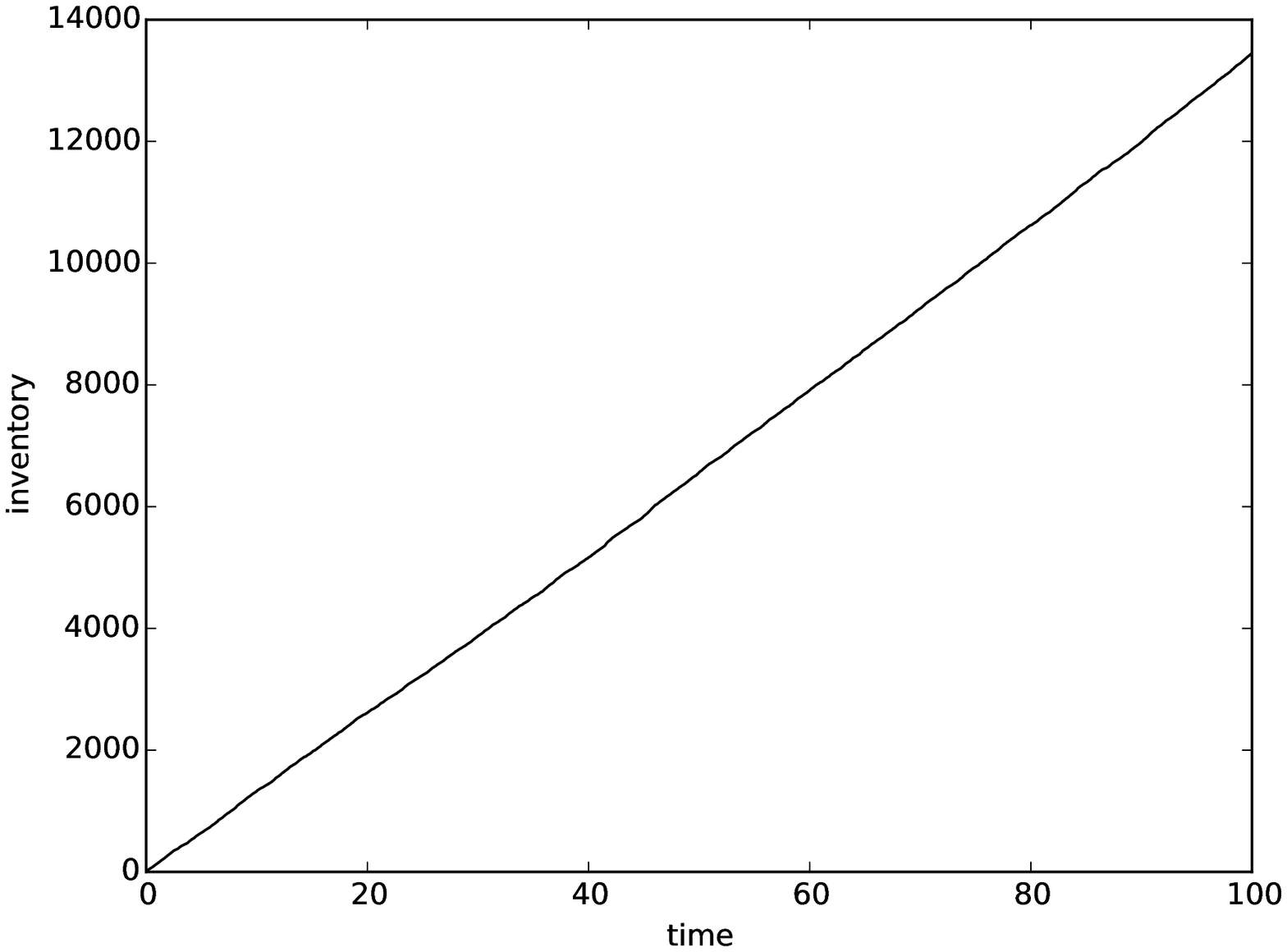}
	\caption{Remaining cash process (left) and inventory process (right) for the optimal bidding strategy computed numerically, over a time window of $100$ seconds.}\label{fig:app1}
\end{figure}

\begin{figure}[h!]
	\centering
		\includegraphics[width = 7.1cm]{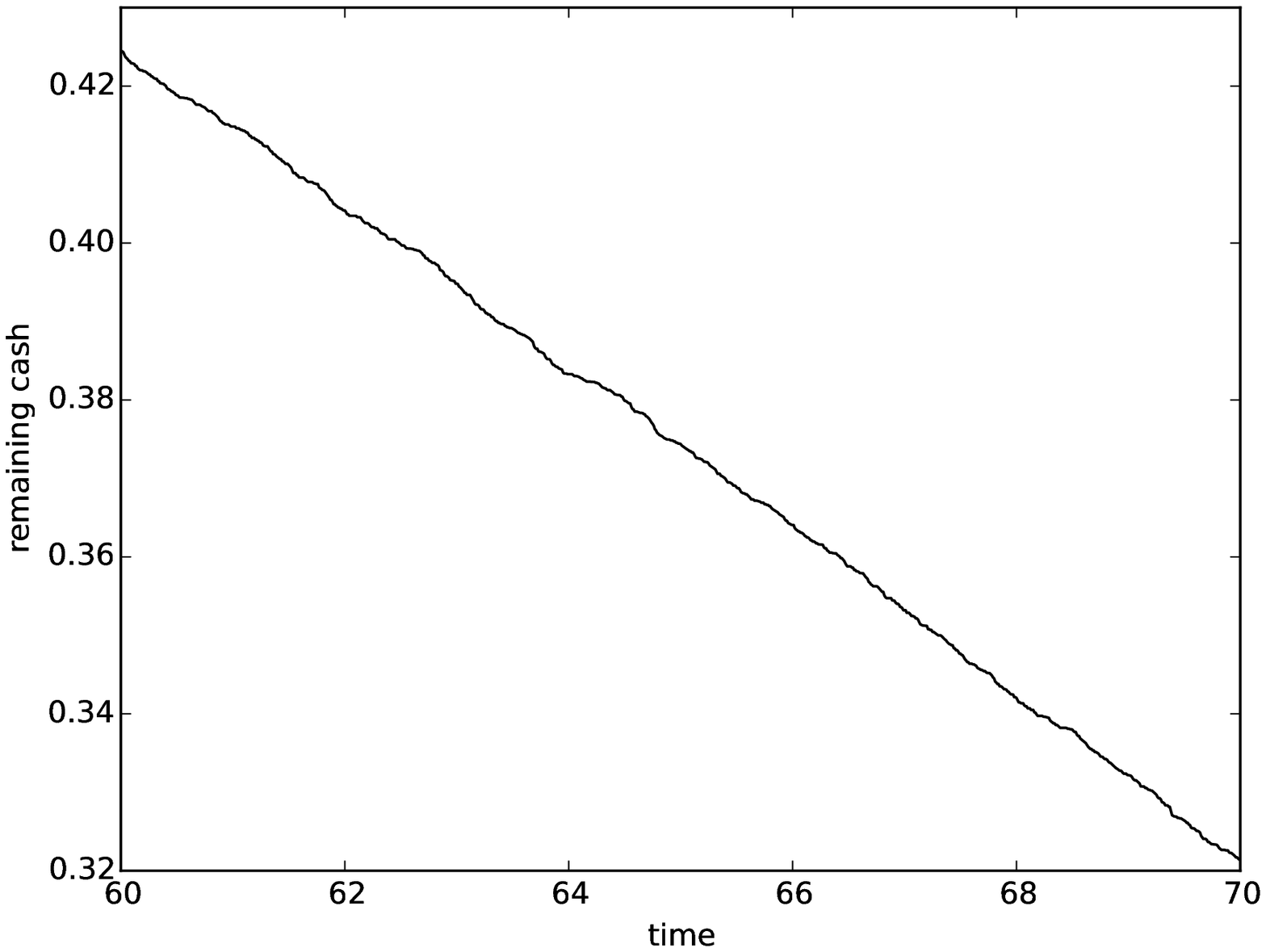}\includegraphics[width = 7.1cm]{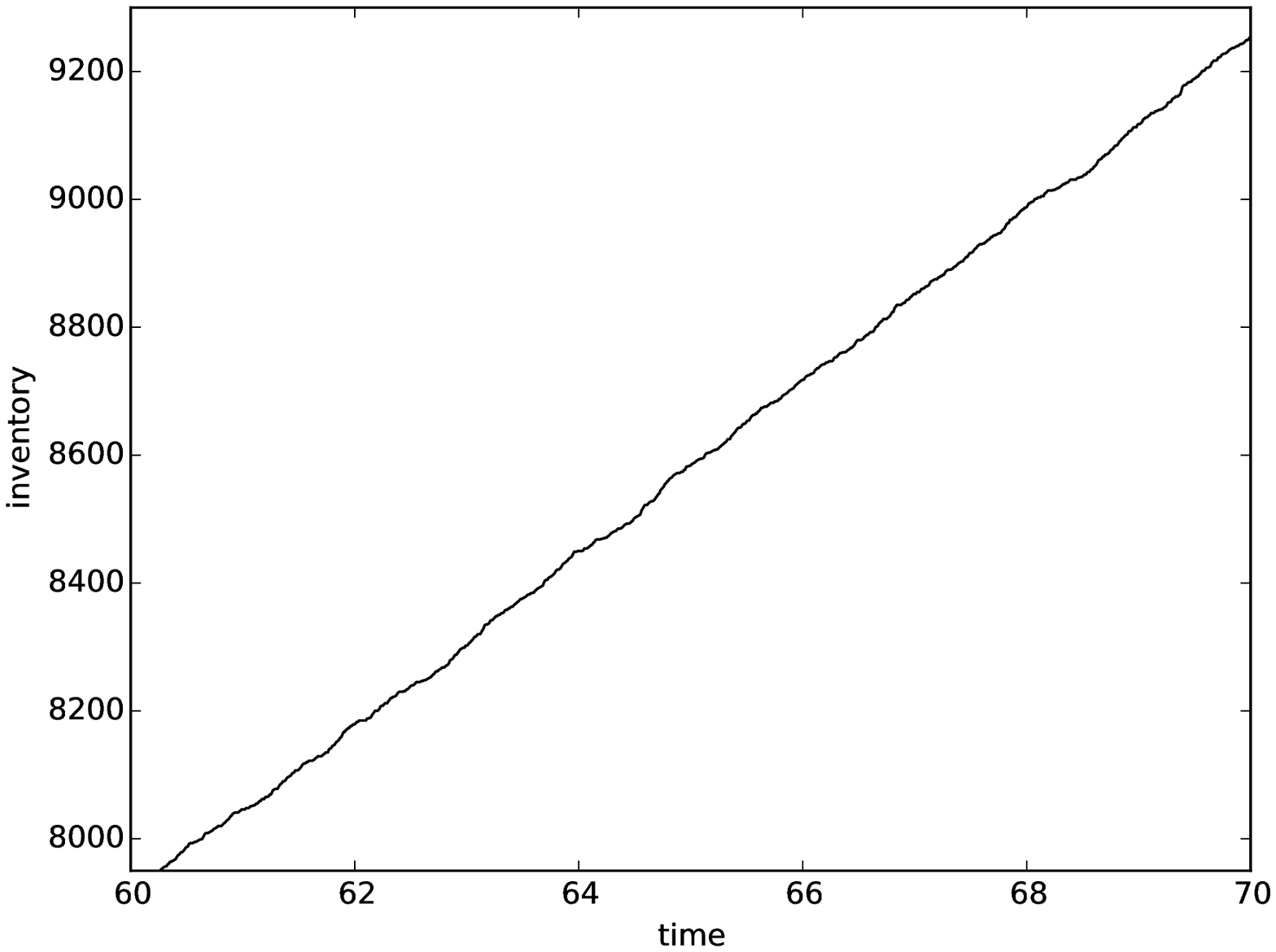}
	\caption{Remaining cash process (left) and inventory process (right) for the optimal bidding strategy computed numerically -- zoom over 10 seconds.}\label{fig:app2}
\end{figure}

\begin{figure}[h!]
	\centering
		\includegraphics[width = 7.1cm]{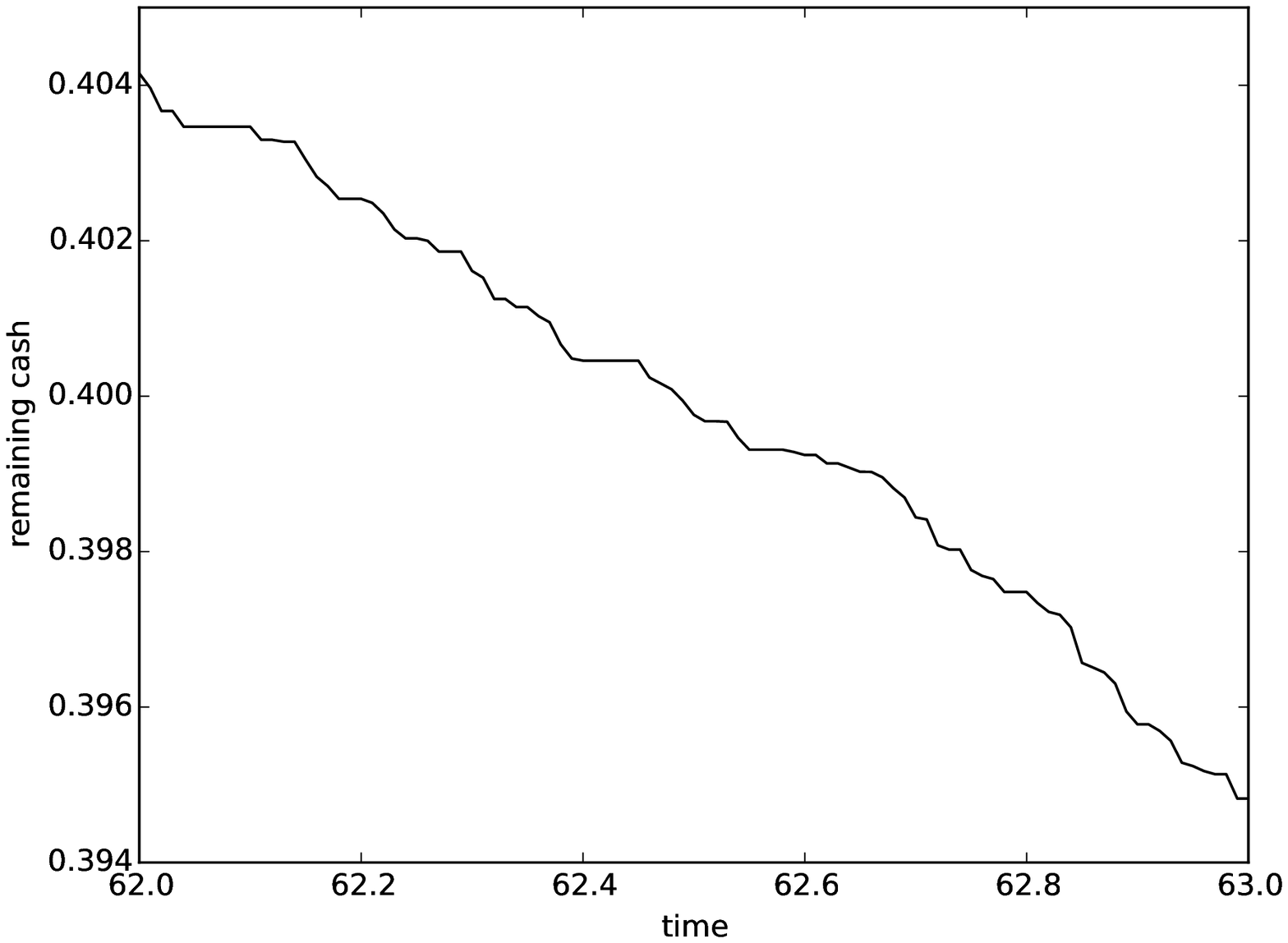}\includegraphics[width = 7.1cm]{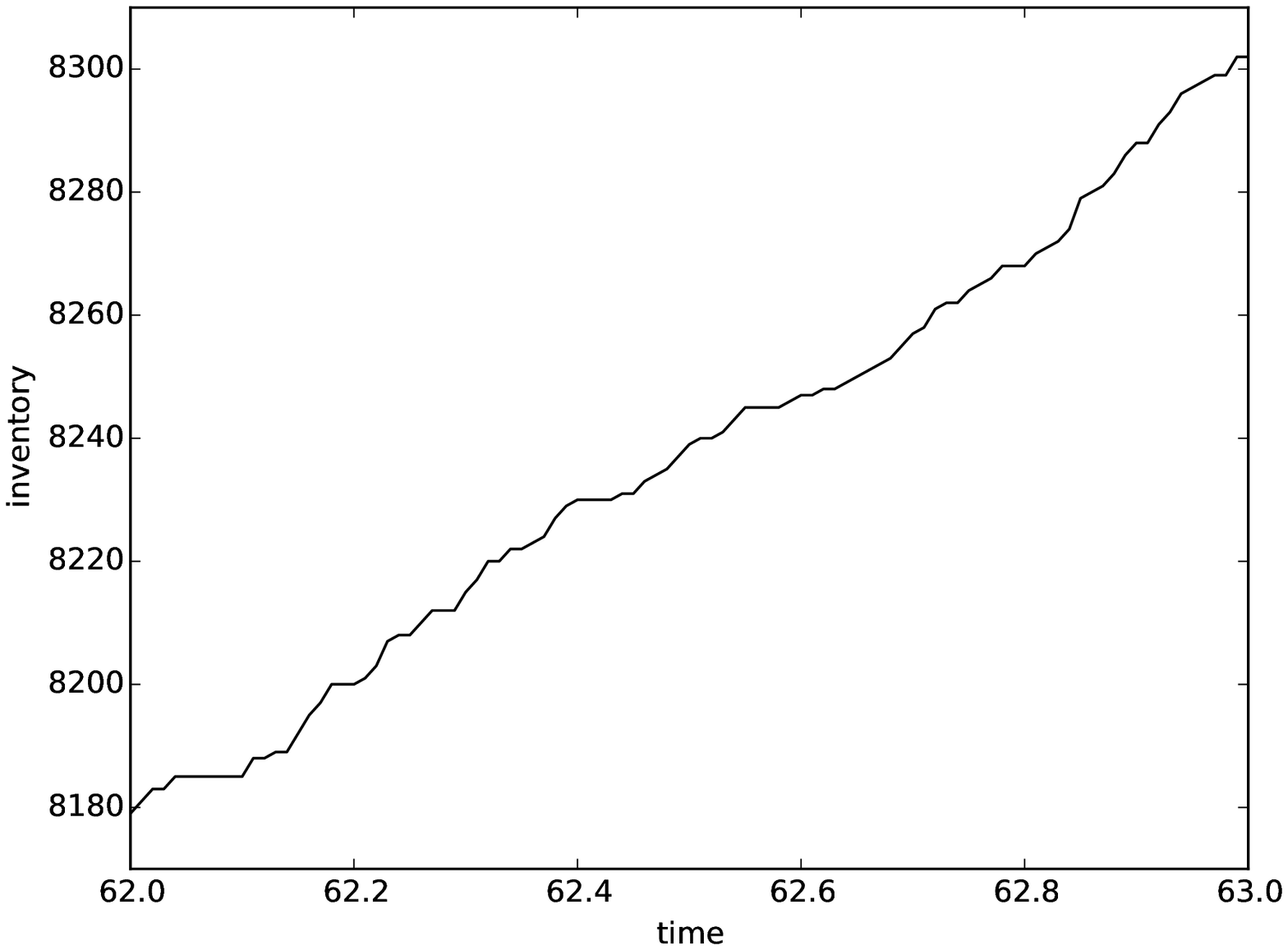}
\caption{Remaining cash process (left) and inventory process (right) for the optimal bidding strategy computed numerically -- zoom over 1 second.}\label{fig:app3}
\end{figure}

Figures \ref{fig:app1},  \ref{fig:app2} and  \ref{fig:app3} show the result for different time windows (we zoom in from 100 s, to 10 s, to 1 s): the remaining-cash process is plotted in the left panels and the number of impressions is plotted in the right panels. For the (realistic) values of the parameters we considered, it is clear that the budget is spent linearly.\\

We also carried out simulations for a smaller value of $\lambda$ ($\lambda = 2 \text{ s}^{-1}$) -- \emph{i.e.} in a less liquid market, with a smaller number of auction requests -- in order to test the robustness of the qualitative result obtained with the fluid-limit approximation. In Figure \ref{fig:low2}, we plot the solution $v$ to the HJB equation \eqref{m1:HJB2} and the associated optimal bids (in log).       In Figure~\ref{fig:low1}, we plot the outcome of the simulations corresponding to the new values of the parameters.\\

As above, we see that spending rate fluctuates but remains around a constant corresponding to even spending over the trading period.\\

\begin{figure}[h!]
	\centering
		\includegraphics[width = 8cm]{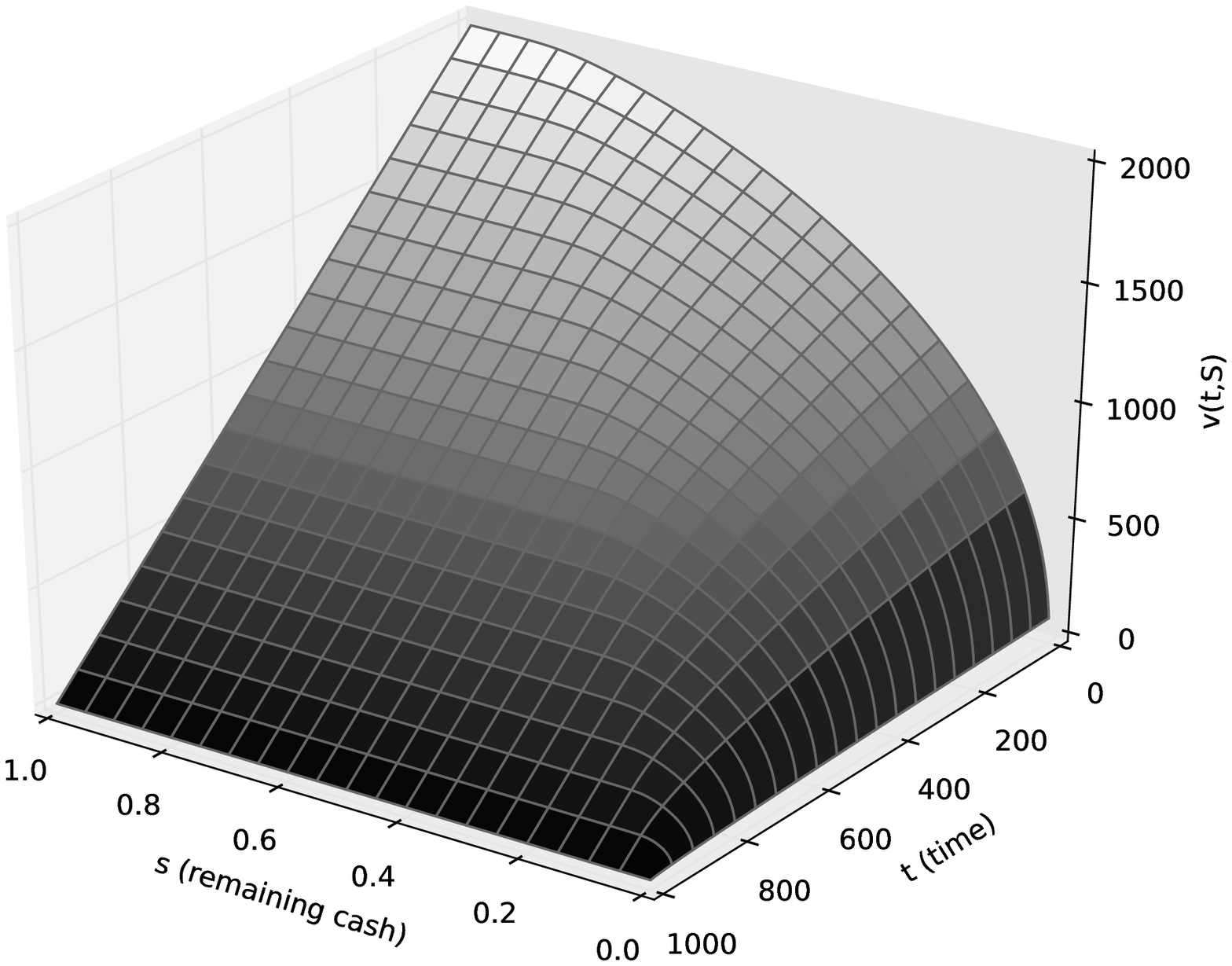}\includegraphics[width = 8cm]{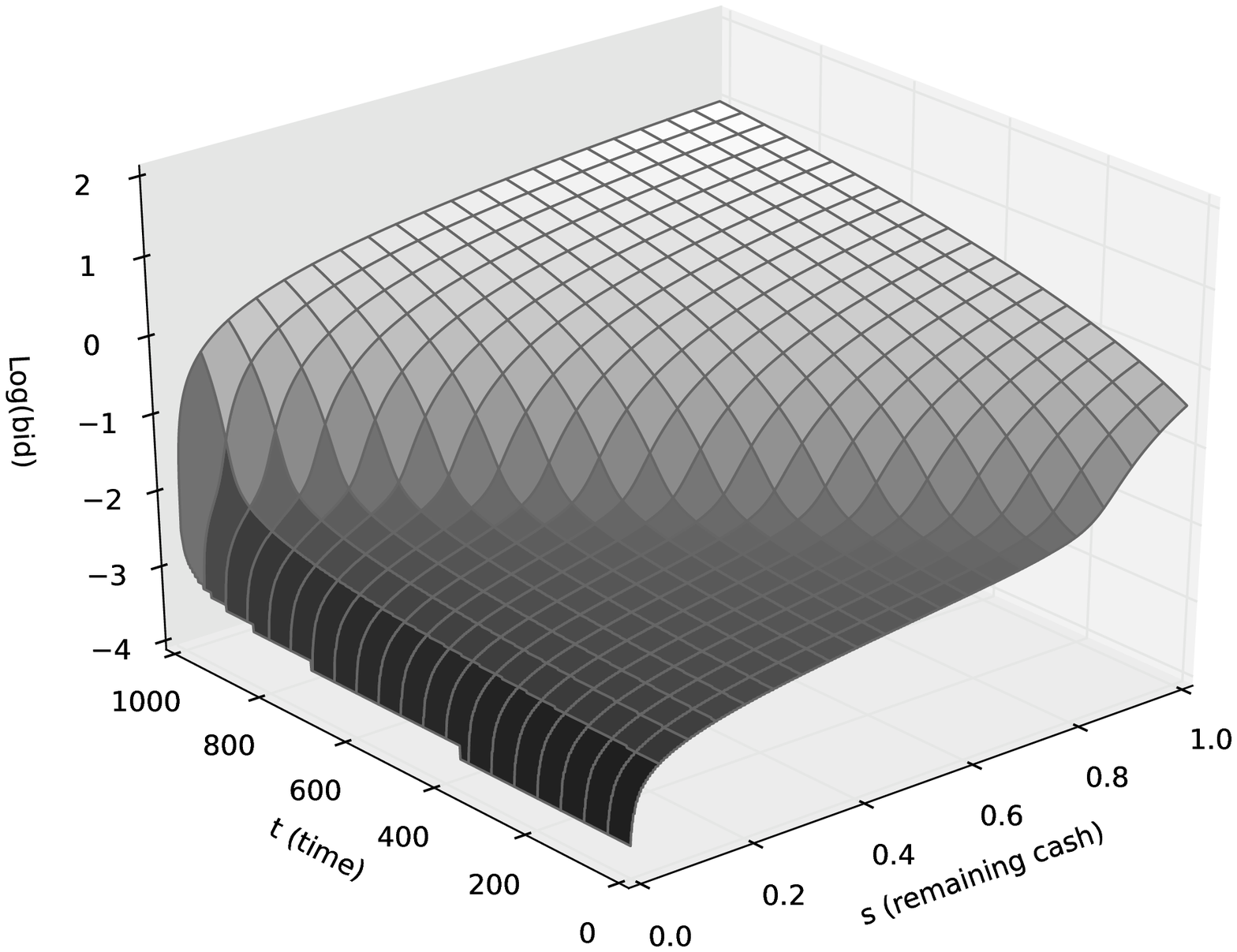}
	\caption{Solution of the HJB equation \eqref{m1:HJB2} (left panel) and optimal bid $\log_{10}(b^*(t,S))$ (right panel). $\lambda= 2 \text{ s}^{-1}$, $T=1000 \text{ s}$ and $\bar{S} =$ \EUR{$1$}. }\label{fig:low2}
\end{figure}
\vspace{0.7cm}
\begin{figure}[h!]
	\centering
		\includegraphics[width = 7.1cm]{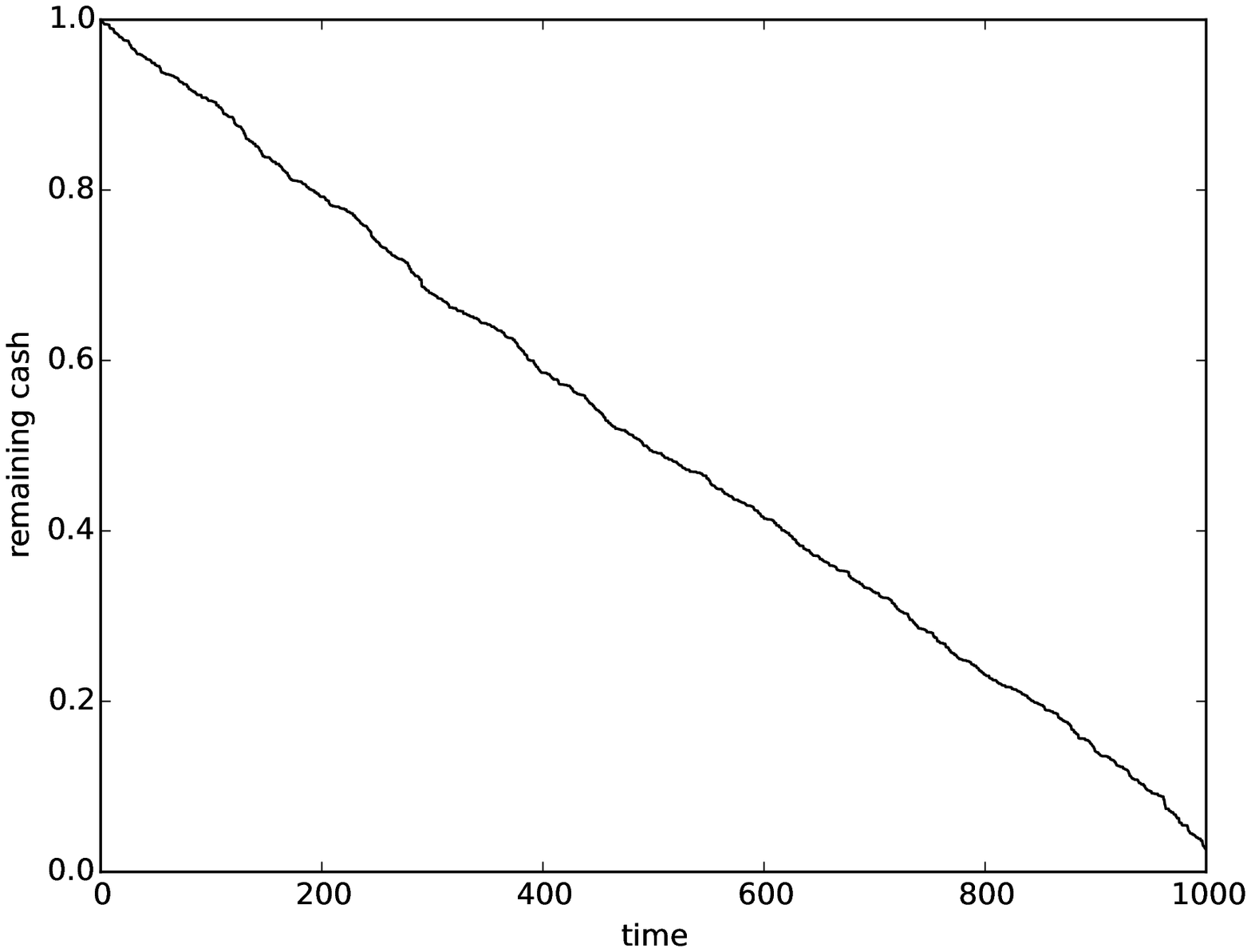}\includegraphics[width = 7.1cm]{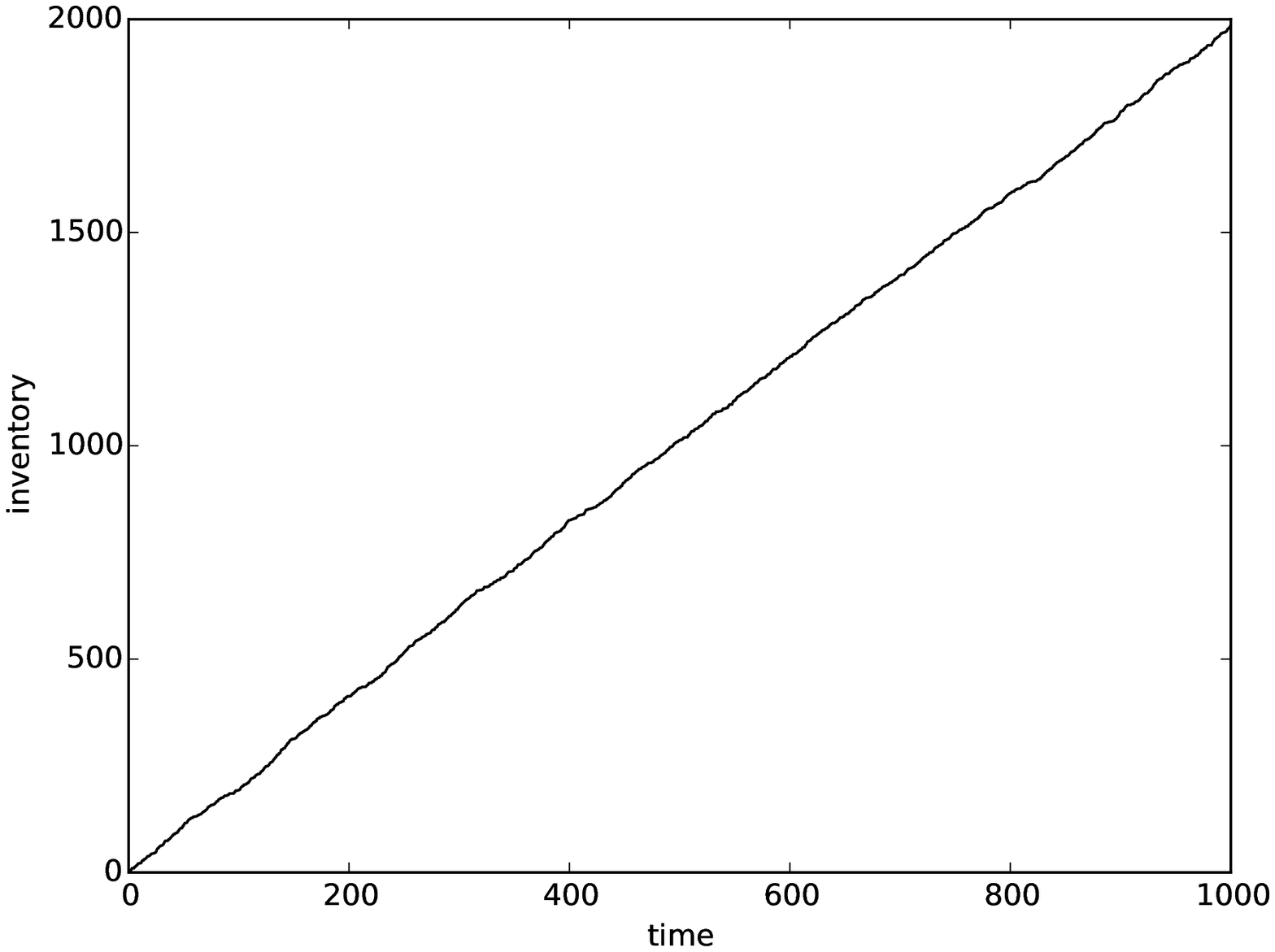}
\caption{Remaining cash process (left) and inventory process (right) for the optimal bidding strategy computed numerically. $\lambda= 2 \text{ s}^{-1}$, $T=1000 \text{ s}$ and $\bar{S} =$ \EUR{$1$}.}\label{fig:low1}
\end{figure}

\section{Extensions and discussion about the model}

In this section, we present two important extensions of the model introduced in Section~2. The first one is a multi-source extension: we consider the case where auction requests arise from several ad exchanges (or from the same ad exchange but from different types of audience), with different frequencies, and  different laws for the price to beat. The second extension is related to conversions: in addition to the number of impressions they buy, ad trading desks often consider the number of conversions as a KPI, which is more in line with the return-on-investment goals of the brand or company launching the advertising campaign. For both extensions, we show that the problem remains in dimension $2$ (after a change of variables), even though the dimension of the state space increases proportionally with the number of sources and almost doubles when conversions are taken into account.\\

In addition to these two extensions, we discuss many features of the model. In particular, the difference between Vickrey auctions and first-price auctions is addressed, along with the existence of floor prices of several kinds. KPIs are finally discussed because the dimensionality reduction at work in the model of Section 2 and in the two extensions presented below is associated with the linearity of the objective function (up to the penalty term).\\

\subsection{Introducing heterogeneity in auctions requests}

The first extension we consider focuses on the situation where the inventory can be purchased from different sources, \emph{i.e.} different ad exchanges, or simply different types of audience. The heterogeneity comes from differences in the frequency of arrival of auction requests, in the distribution of the price to beat associated with each source, or in the relative importance given to each type of inventory in the objective function. An important situation is when the inventory is homogeneous across different ad exchanges and the differences only lie in the statistical properties of the auctions. In that case, we show -- in the fluid-limit case -- that the optimal bid are the same across venues, which is consistent with the result obtained in \cite{f1}. However, when audiences are heterogenous, we show that the optimal bids differ from one source to the next as a function of the relative importance given to the different types of audience in the objective function.\\

\subsubsection{Setup of the model}

In what follows, we still consider an ad trader who wishes to spend a given amount of money~$\bar{S}$ over a time window $[0,T]$.\\

\emph{Auctions:}\\

This ad trader is connected to $J>1$ sources from which he receives requests to participate in auctions in order to purchase inventory -- we assume that the trader knows from which source each auction request arises. Requests are modeled with $J$ marked Poisson processes: the arrival of requests from the source $j \in \lbrace 1, \ldots, J \rbrace$ is triggered by the jumps of the Poisson process $(N^j_t)_t$ with intensity $\lambda^j > 0$, and the marks $(p^j_n)_{n \in \mathbb{N}^*}$ correspond, for each auction request sent by the source $j$, to the highest bid sent by the other participants.\footnote{Throughout this extension and the next one, the superscripts $j \in \lbrace 1, \ldots, J\rbrace$ designate the sources and not exponents.}\\

Every time he receives from the source $j$ a request to participate in an auction, the ad trader can bid a price: at time $t$, if he receives a request from the source $j$, then we denote his bid by $b^j_t$. As in the model with only one source, we assume that the ad trader stands ready to bid (possibly a bid equal to $0$ or $+\infty$) at all times. In particular, we assume that for each $j \in \lbrace 1, \ldots, J \rbrace$, the process $(b^j_t)_t$ is a predictable process with values in $\mathbb{R}_+ \cup \{+\infty\}$.\\

If at time $t$ the $n^{\text{th}}$ auction associated with the source $j$ occurs, the outcome of this auction is the following:
\begin{itemize}
  \item If $b^j_t > p^j_{n}$, then the ad trader wins the auction: he pays the price $p^j_{n}$ and his banner is displayed.
  \item If $b^j_t \le p^j_{n}$, then another trader wins the auction.
\end{itemize}

Like in the model with only one source, we assume that, for each $j \in \lbrace 1, \ldots, J \rbrace$, $(p^j_n)_{n \in \mathbb{N}^*}$ are \emph{i.i.d.} random variables distributed according to an absolutely continuous distribution. We denote by $F^j$ the cumulative distribution function and by $f^j$ the probability density function associated with the source $j$. As above, we assume, for each $j \in \lbrace 1, \ldots, J \rbrace$, that:
\begin{itemize}
  \item $\forall n\in \mathbb N^*$, $p^j_n$ is almost surely positive. In particular, $F^j(0) = 0$.
  %\item $\forall n\in \mathbb N^*$, $p_n \in L^2(\Omega)$.
  \item $\forall p > 0, f^j(p) > 0$.
  \item $\lim_{p \to +\infty} p^3 f^j(p) = 0$.\\
\end{itemize}
We also assume that the random variables $(p^j_n)_{j \in \lbrace 1, \ldots, J \rbrace, n \in \mathbb{N}^*}$ are all independent.\\

\emph{Remaining cash process:}\\

As above, we denote by $(S_t)_t$ the process modeling the amount of cash to be spent. Its dynamics is:
\begin{equation*}
dS_t = - \sum_{j=1}^J p^j_{N^j_t} \mathbf{1}_{\{b^j_t> p^j_{N^j_t}\}}dN^j_t, \quad S_0 = \bar{S}.
\end{equation*}

\emph{Inventory processes:}\\

For each $j \in \lbrace 1, \ldots, J \rbrace$, the number of impressions associated with the auction requests coming for the source $j$ is modeled by an inventory process $(I^j_t)_t$. For each $j \in \lbrace 1, \ldots, J \rbrace$, the dynamics of $(I^j_t)_t$ is:
\begin{equation*}
dI^j_t = \mathbf{1}_{\{b^j_t> p^j_{N^j_t}\}}dN^j_t, \quad I^j_0 = 0.
\end{equation*}

To simplify exposition, we write $I_t = (I^1_t, \ldots, I^J_t) \in \mathbb{N}^J$, and we write the dynamics of the process $(I_t)_t$:

$$dI_t = \sum_{j=1}^J \mathbf{1}_{\{b^j_t> p^j_{N^j_t}\}}dN^j_t e^j,$$ where $(e^1, \ldots, e^J)$ is the canonical basis of $\mathbb{R}^J$.\\

\vspace{2cm}

\emph{Stochastic optimal control problem:}\\

In this first extension of the model, the trader aims at maximizing the expected value of an indicator of the form
$$\alpha^1 I^1_T + \ldots + \alpha^J I^J_T, \quad \alpha^1,\ldots, \alpha^J \ge 0.$$
As in Section 2, we consider a relaxed problem with a penalty for extra-spending:
\begin{equation*}
\inf_{(b^1_t, \ldots, b^J_t)_t \in \mathcal{A}^J}\mathbb E \left[-\sum_{j=1}^J \alpha^j I^j_T + K \min\left(S_T,0\right)^2\right].
\end{equation*}

\subsubsection{HJB equations: from dimension $J+2$ to dimension $2$}

The value function associated with this problem is:
\begin{equation*}
u: (t,I,S) \in [0,T]\times \mathbb{N}^J\times (-\infty, \bar{S}] \mapsto  \inf_{(b^1_s, \ldots, b^J_s)_{s\ge t} \in \mathcal{A}_{t}^J} \mathbb E\left[-\sum_{j=1}^J \alpha^j {I_T^j}^{b,t,I^j} + K \min\left(S^{b,t,S}_T,0\right)^2\right],\end{equation*}
where
\begin{equation*}
dS^{b,t,S}_s = - \sum_{j=1}^J p^j_{N^j_s} \mathbf{1}_{\{b^j_s> p^j_{N^j_s}\}}dN^j_s, \quad S^{b,t,S}_t = S,
\end{equation*}
\begin{equation*}
d{I^j_s}^{b,t,I^j} = \mathbf{1}_{\{b^j_s> p^j_{N^j_s}\}}dN^j_s, \quad {I^j_t}^{b,t,I^j} = I^j, \quad \forall j \in \lbrace 1, \ldots, J \rbrace.
\end{equation*}

The associated Hamilton-Jacobi-Bellman equation is:

\begin{equation}
\label{m1:HJBmulti}
-\partial_t u(t,I,S) - \sum_{j=1}^J \lambda^j \inf_{b^j \in \mathbb R_+}\int_0^{b^j} f^j(p) (u(t,I+e^j,S-p) - u(t,I,S)) dp = 0,
\end{equation}
with terminal condition $u(T,I^1,\ldots,I^J,S) = - \sum_{j=1}^J \alpha^j I^j + K \min\left(S,0\right)^2$.\\

Eq. (\ref{m1:HJBmulti}) is a non-standard integro-differential HJB equation in dimension $J+2$ which generalizes Eq. (\ref{m1:HJB}). As in the case with only one source, we can consider an ansatz of the form $$u(t,I^1,\ldots,I^J,S) = - \sum_{j=1}^J \alpha^j I^j + v(t,S).$$

With this ansatz, Eq. (\ref{m1:HJBmulti}) becomes indeed:
\begin{equation}
\label{m1:HJBmulti2}
-\partial_t v(t,S) - \sum_{j=1}^J \lambda^j \inf_{b^j \in \mathbb R_+} \int_0^{b^j} f^j(p) (v(t,S-p) - v(t,S)-\alpha^j) dp = 0,
\end{equation}
with terminal condition $v(T,S) = K \min\left(S,0\right)^2$.\\

Eq. (\ref{m1:HJBmulti2}) is another HJB equation, but in dimension $2$, which generalize Eq. (\ref{m1:HJB2}).

\subsubsection{Fluid limit approximation}

As in the model with only one source, one could numerically approximate the solution to Eq. (\ref{m1:HJBmulti2}) and subsequently find the optimal bidding strategy. However, it is interesting to consider the approximation arising from  the replacement of  the term $ v(t,S-p) - v(t,S)$ in  Eq. (\ref{m1:HJBmulti2}) by its first-order Taylor expansion $-p \partial_S v(t,S)$. We therefore replace Eq. (\ref{m1:HJBmulti2}) by:

\begin{equation}
\label{m1:HJBmultic}-\partial_t v(t,S) - \sum_{j=1}^J \lambda^j \inf_{b^j \in \mathbb R_+} - \int_0^{b^j} f^j(p) (\alpha^j + p \partial_S v(t,S)) dp = 0,
\end{equation}
with terminal condition $v(T,S) = K \min\left(S,0\right)^2$.\\

Eq. (\ref{m1:HJBmultic}) is related to the fluid limit of the above stochastic optimal control problem which is defined by:

\begin{equation*}
\inf_{(\tilde{b}^1_t,\ldots, \tilde{b}^J_t)_t \in \mathcal{A}_{\text{det}}^J} -\sum_{j=1}^J\lambda^j \alpha^j \int_0^T F^j(\tilde{b}^j_t) dt + K \min\left(\tilde{S}^{\tilde{b}}_T,0\right)^2,
\end{equation*}
where $$d\tilde{S}^{\tilde{b}}_t = - \sum_{j=1}^J\lambda^j G^j(\tilde{b}^j_t)dt, \quad G^j: x \in \mathbb{R}_+ \cup \{+\infty\} \mapsto \int_0^{x} p f^j(p) dp.$$

The value function $\tilde{v}$ associated with this problem is:

\begin{equation*}
\tilde{v}(t,S) = \inf_{(\tilde{b}^1_s, \ldots, \tilde{b}^J_s )_{s\ge t} \in \mathcal{A}_{\text{det}}} -\sum_{j=1}^J\lambda^j \alpha^j \int_t^T F^j(\tilde{b}^j_s) ds + K \min\left(\tilde{S}^{\tilde{b},t,S}_T,0\right)^2,
\end{equation*}
where $$d\tilde{S}^{\tilde{b},t,S}_s = - \sum_{j=1}^J\lambda^j G^j(\tilde{b}^j_s)ds, \quad \tilde{S}^{\tilde{b},t,S}_t = S.$$

Using the same techniques as for Theorem \ref{theo}, we can prove the following Theorem:

\begin{Theorem}
\label{theomulti}
Let us define
\begin{eqnarray*}
% \nonumber to remove numbering (before each equation)
\forall j \in \lbrace 1, \ldots, J \rbrace, \quad H^j(x) &=& \lambda^j \sup_{b^j \in \mathbb R_+} \int_0^{b^j} f^j(p)\left(\alpha^j+ xp\right) dp,
\end{eqnarray*}
and $$H = H^1 + \ldots + H^J.$$

The value function $\tilde{v}$ is given by:
$$\tilde{v}(t,S) = \sup_{x \le 0} \left(Sx - (T-t) H(x) - \frac{x^2}{4K}\right).$$
It is the unique weak semi-concave solution of the Hamilton-Jacobi equation (\ref{m1:HJBmultic}).\\

Furthermore, the optimal control function $(t,S) \mapsto \tilde{b}^*(t,S)$ is given by the following:
\begin{itemize}
  \item If $S \ge \sum_{j=1}^J\lambda^j (T-t) \int_0^{\infty} pf^j(p) dp$, then: $$\forall j \in \lbrace 1, \ldots, J \rbrace, \quad \tilde{b}^{j*}(t,S) = +\infty.$$
  \item If $S <  \sum_{j=1}^J\lambda^j (T-t) \int_0^{\infty} pf^j(p) dp$, then: $$\forall j \in \lbrace 1, \ldots, J \rbrace, \quad \tilde{b}^{j*}(t,S) = -\frac{\alpha^j}{x^*},$$ where $x^*$ is characterized by \begin{equation}
\label{m1:characmultiy}
S = (T-t) H'(x^*) + \frac{x^*}{2K}.
\end{equation}
\end{itemize}
\end{Theorem}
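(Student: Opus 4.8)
The plan is to follow exactly the same strategy as in the proof of Theorem \ref{theo}, exploiting the additive separability of the multi-source problem. First I would apply, for each source $j$, the change of variables $a^j_s = \lambda^j G^j(\tilde{b}^j_s) \in \mathcal{I}^j = [0, \lambda^j \int_0^\infty p f^j(p) dp]$, so that the aggregate control $a_s = \sum_{j=1}^J a^j_s$ drives the cash dynamics $d\widehat{S}^{a,t,S}_s = -a_s\, ds$. The key point is that the running cost decouples: writing $L^j(a^j) = -\lambda^j \alpha^j F^j\big((G^j)^{-1}(a^j/\lambda^j)\big)$, exactly as in the single-source computation one checks that each $L^j$ is continuously differentiable on the interior of $\mathcal{I}^j$ with $(L^j)'(a^j) = -\alpha^j / (G^j)^{-1}(a^j/\lambda^j)$, hence strictly convex since $G^j$ is increasing.

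Next I would compute the Legendre-Fenchel transform of the total running cost. Because the instantaneous problem is to minimize $\sum_j L^j(a^j)$ subject to $\sum_j a^j = a$ fixed, the effective Lagrangian is the infimal convolution $L(a) = \inf_{\sum_j a^j = a} \sum_j L^j(a^j)$, and the fundamental fact is that the Legendre transform turns infimal convolution into a sum: $L^*(x) = \sum_{j=1}^J (L^j)^*(x) = \sum_{j=1}^J H^j(x) = H(x)$. Here each $(L^j)^*(x) = \lambda^j \sup_{b^j} \int_0^{b^j} f^j(p)(\alpha^j + xp)\,dp = H^j(x)$ by the same first-order-condition argument as before, giving $b^{j*} = -\alpha^j/x$ when $x<0$ and $b^{j*} = +\infty$ when $x \ge 0$. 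I would then record the explicit piecewise form of $H$ and verify, using $\lim_{p\to\infty} p^3 f^j(p) = 0$ for each $j$, that $H$ is a $C^2$ convex function with $H'(0) = \sum_j \lambda^j \int_0^\infty p f^j(p)\,dp$.

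From this point the argument is verbatim the single-source case. The reduced problem (\ref{lfa}) with running cost $L$ and terminal penalty $K\min(\cdot,0)^2$ is a scalar calculus-of-variations problem, so $\tilde{v}$ is given by the Hopf-Lax formula, and a second infimal convolution (now against the penalty, whose conjugate is $\chi_{\mathbb{R}_-}(x) + x^2/(4K)$) yields $\tilde{v}(t,S) = \sup_{x\le 0}\big(Sx - (T-t)H(x) - \tfrac{x^2}{4K}\big)$. The identification with the unique weak semi-concave solution of (\ref{m1:HJBmultic}) follows because that equation is exactly $-\partial_t \tilde{v} + H(\partial_S \tilde{v}) = 0$ with the same terminal data, invoking \cite{can,evans}. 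Finally, the optimal control in the reduced problem is $a^*(t,S) = H'(\partial_S \tilde{v}(t,S))$; decomposing through each $(L^j)^*$ recovers $\tilde{b}^{j*}(t,S) = -\alpha^j/\partial_S\tilde{v}(t,S)$ when $\partial_S\tilde{v}<0$ and $+\infty$ otherwise, and substituting $x^* = \partial_S\tilde{v}(t,S)$ characterized by (\ref{m1:characmultiy}) gives the stated bids.

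The main obstacle, and the only genuinely new ingredient relative to Theorem \ref{theo}, is the infimal-convolution step: one must justify that aggregating the $J$ separate controls into the single scalar state $S$ is legitimate, i.e. that minimizing the separable cost over the vector $(a^1,\dots,a^J)$ with fixed sum $a$ produces a scalar effective cost whose conjugate is the plain sum $H = \sum_j H^j$. This is where the additive structure of both the dynamics and the objective is essential; once the conjugacy identity $(L^1 \square \cdots \square L^J)^* = \sum_j (L^j)^*$ is in place, the dimensional reduction to a one-dimensional variational problem is complete and everything else is a routine transcription of the single-source arguments.
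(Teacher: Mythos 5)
Your proposal is correct and follows essentially the same route as the paper's own proof: the per-source change of variables $a^j_s = \lambda^j G^j(\tilde{b}^j_s)$, the reduction to a scalar problem via the infimal convolution $L = L^1 \square \cdots \square L^J$, the conjugacy identity $L^* = \sum_j (L^j)^* = H$, the Hopf--Lax formula combined with a second infimal convolution against the penalty, and the recovery of the per-source bids from $a^{j*} = (H^j)'(\partial_S \tilde{v})$. You also correctly single out the aggregation-by-infimal-convolution step as the only genuinely new ingredient relative to Theorem \ref{theo}, which is precisely how the paper structures its argument.
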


\begin{proof}

By using the change of variables $a^j_s = \lambda^j G^j(\tilde{b}^j_s) \in \mathcal{I}^j = [0,\lambda^j \int_0^\infty p f^j(p) dp]$, we have:
\begin{equation}
\label{lfamulti}
\tilde{v}(t,S) = \inf_{(a^1_s, \ldots, a^J_s)_{s\ge t} \in \mathcal{A}'_{\text{det},\mathcal{I}^1, \ldots, \mathcal{I}^J}} \int_t^T \sum_{j=1}^J L^j\left(a^j_s\right) ds + K \min\left(\widehat{S}^{a,t,S}_T,0\right)^2,
\end{equation}
where $\mathcal{A}'_{\text{det},\mathcal{I}^1, \ldots, \mathcal{I}^J}$ is the set of $\mathcal{F}_0$-measurable processes with values in $\mathcal{I}^1 \times \ldots \times \mathcal{I}^J$, where $$d\widehat{S}^{a,t,S}_s = -\left(a^1_s + \ldots + a^J_s\right) ds, \quad \widehat{S}^{a,t,S}_t = S,$$ and where, for all $j \in \lbrace 1, \ldots, J \rbrace$, the function $L^j$ is defined by:
  $$L^j : a^j \in \mathcal{I}^j  \mapsto -\lambda^j \alpha^j F^j\left({G^j}^{-1}\left(\frac{a^j}{\lambda^j}\right)\right).$$

An equivalent way to write Eq. (\ref{lfamulti}) is:
\begin{equation}
\label{lfamulti2}
\tilde{v}(t,S) = \inf_{(\mathfrak{a}_s)_{s\ge t} \in \mathcal{A}''_{\text{det}}} \int_t^T L(\mathfrak{a}_s) ds + K \min\left(\check{S}^{\mathfrak{a},t,S}_T,0\right)^2,
\end{equation}
where $\mathcal{A}''_{\text{det}}$ is the set of $\mathcal{F}_0$-measurable processes with values in $\mathcal{I}=[0,\sum_{j=1}^J\lambda^j \int_0^\infty p f^j(p) dp]$, where $$d\check{S}^{\mathfrak{a},t,S}_s = -\mathfrak{a}_s ds, \quad \check{S}^{\mathfrak{a},t,S}_t = S,$$ and where
$$L: \mathfrak{a} \in \mathcal{I} \mapsto \inf_{(a^1, \ldots, a^J) \in \mathcal{I}^1 \times \ldots \times \mathcal{I}^J, a^1+  \ldots +  a^J = \mathfrak{a} } \sum_{j=1}^J L^j\left(a^j\right).$$

As in the proof of Theorem \ref{theo}, $L^j$ is a convex function, continuously differentiable on the interior of $\mathcal{I}^j$.\\

Let us now compute the Legendre-Fenchel transform of $L^j$:
\begin{eqnarray}
\nonumber {L^j}^*(x) &=& \sup_{a^j \in \mathcal{I}^j } x a^j - L^j(a^j)\\
\nonumber &=& \sup_{b^j \in \mathbb{R}_+ } \lambda^j x G^j(b^j) + \lambda^j \alpha^j F^j(b^j)\\
\label{m1:supbmulti}&=& \lambda^j \sup_{b^j \in \mathbb{R}_+ } \int_0^{b^j}  (\alpha^j+ xp) f^j(p) dp\\
\nonumber &=& H^j(x).
\end{eqnarray}

As in the proof of Theorem \ref{theo}, we can easily find the supremum in Eq. (\ref{m1:supbmulti}):
\begin{itemize}
  \item If $x < 0$, then $b^{j*} = - \frac{\alpha^j}{x}$, and
  \begin{eqnarray*}
% \nonumber to remove numbering (before each equation)
  H^j(x) &=& \lambda^j \int_0^{- \frac{\alpha^j}{x}}  (\alpha^j+ xp) f^j(p) dp\\
   &=& -\lambda^j x \int_0^{-\frac{\alpha^j}{x}} F^j(p) dp. \\
\end{eqnarray*}
  \item If $x$ is nonnegative, then $b^{j*} = +\infty$, and
  $$H^j(x) = \lambda^j\left(\alpha^j+ x \int_0^{\infty} p f^j(p) dp\right).$$
\end{itemize}

In particular, $H^j$ is a $C^2$ function.\\

By using an infimal convolution, the function $L$ is convex, and its Legendre transform is $H = H^1 + \ldots + H^J$.\\

From Eq. (\ref{lfamulti2}), $\tilde{v}$ is given by the Hopf-Lax formula:
\begin{eqnarray*}
% \nonumber to remove numbering (before each equation)
\tilde{v}(t,S) &=&  \inf_{y \in [S - \sum_{j=1}^J \lambda^j \int_0^\infty pf^j(p) dp (T-t),S]} \left((T-t) L\left(\frac{S-y}{T-t}\right) + K\min(y,0)^2\right).
\end{eqnarray*}

Equivalently, by infimal convolution, we have:
\begin{eqnarray}
\nonumber \tilde{v}(t,S) &=& \sup_{x} \left(Sx - (T-t) H(x) - \sup_y \left(yx - K\min(y,0)^2\right)\right)\\
\label{m1:vcffmulti} &=& \sup_{x \le 0} \left(Sx - (T-t) H(x) - \frac{x^2}{4K}\right).
\end{eqnarray}

This is the first result of the Theorem.\\

Furthermore, we know that $\tilde{v}$ is the unique weak semi-concave solution of the following Hamilton-Jacobi equation:
\begin{equation*}
-\partial_t \tilde{v}(t,S) + H(\partial_S \tilde{v}(t,S)) = 0,
\end{equation*}
with terminal condition $\tilde{v}(T,S) = K\min(S,0)^2$.\\

This equation is exactly the same as Eq. (\ref{m1:HJBmultic}), hence the second assertion of the Theorem.\\

Moreover the optimal control in the modified problem (\ref{lfamulti2}) is given by $\mathfrak{a}^*(t,S) = H'(\partial_S \tilde{v}(t,S))$. Therefore, by classical results on infimal convolutions, the optimal controls in the modified problem (\ref{lfamulti}) are given by:
$$\forall j \in \lbrace 1, \ldots, J \rbrace,\quad a^{j*}(t,S) = {H^j}'(\partial_S \tilde{v}(t,S)).$$

We conclude that the optimal control function in the initial problem is:
\begin{equation*}
\tilde{b}^{j*}(t,S) = \left\{
                    \begin{array}{ll}
                       -\frac{\alpha^j}{\partial_S \tilde{v}(t,S)}, & \partial_S \tilde{v}(t,S) <0 \\
                      +\infty, & \partial_S \tilde{v}(t,S) \ge 0.
                    \end{array}
                  \right.
\end{equation*}

If $S \ge (T-t) H'(0) = \sum_{j=1}^J\lambda^j (T-t)  \int_0^{\infty} pf^j(p) dp$, then the supremum in Eq. (\ref{m1:vcffmulti}) is reached at $x^*=0$. Therefore, $\partial_S \tilde{v}(t,S) = 0$ and $\forall j \in \lbrace 1, \ldots, J \rbrace, \tilde{b}^{j*}(t,S) = +\infty$.\\

Otherwise, the supremum in Eq. (\ref{m1:vcffmulti}) is reached at $x^*\le 0$ uniquely characterized by:
\begin{equation*}
S = (T-t) H'(x^*) + \frac{x^*}{2K}.
\end{equation*}

In particular, $\partial_S \tilde{v}(t,S) = x^*$ and $\forall j \in \lbrace 1, \ldots, J \rbrace, \tilde{b}^{j*}(t,S) = -\frac{\alpha^j}{x^*}$ where $x^*$ is characterized by Eq. (\ref{m1:characmultiy}).\\\qed\\
\end{proof}

Following the same reasoning as in Section 2, we approximate the optimal bids $b^{1*}_t, \ldots, b^{J*}_t$ at time $t$ by using the optimal bids in the fluid-limit approximation and in the limit case $K \to +\infty$. In other words, we approximate the optimal bids $b^{1*}_t, \ldots, b^{J*}_t$ at time $t$ by $\tilde{b}^{1*}_\infty(t,S_{t-}), \ldots, \tilde{b}^{J*}_\infty(t,S_{t-})$, where the functions $(\tilde{b}^{j*}_\infty)_j$ are defined by:\\
\begin{itemize}
  \item If $S \ge  \sum_{j=1}^J\lambda^j (T-t) \int_0^{\infty} pf^j(p) dp$, then: $$\forall j \in \lbrace 1, \ldots, J \rbrace, \quad \tilde{b}^{j*}_\infty(t,S) = +\infty.$$
  \item If $S <  \sum_{j=1}^J\lambda^j (T-t) \int_0^{\infty} pf^j(p) dp$, then: \begin{equation}\label{multichar}\forall j \in \lbrace 1, \ldots, J \rbrace, \quad \tilde{b}^{j*}_\infty(t,S) = - \frac{\alpha^j}{{H'}^{-1}\left(\frac{S}{T-t}\right)}.\end{equation}
\end{itemize}

This approximation of the optimal bidding strategy deserves several remarks.\\

First, the analysis for the case $S_{t-} \ge  \sum_{j=1}^J\lambda^j (T-t) \int_0^{\infty} pf^j(p) dp$ is the same as in the one-source case. The value $\sum_{j=1}^J\lambda^j (T-t) \int_0^{\infty} pf^j(p) dp$ represents the expected amount that will be spent between time $t$ and time $T$ if the ad trader wins all the auctions he participates in. Therefore, it is natural to send a very high bid to be sure to win all the auctions if the amount to spend $S_{t-}$ is greater than that value.\\

Second, if $S_{t-} <  \sum_{j=1}^J\lambda^j (T-t) \int_0^{\infty} pf^j(p) dp$, then we see from Eq. (\ref{multichar}) that the bids differ from one source to the other according to the weights $(\alpha^j)_j$:
$$\forall j,j' \in \lbrace 1, \ldots, J \rbrace,\quad \frac{\tilde{b}^{j*}_\infty(t,S)}{\alpha^j} = \frac{\tilde{b}^{j'*}_\infty(t,S)}{\alpha^{j'}}.$$
In particular, the bids should be the same across sources at a given point in time if one is only interested in the total number of impressions, \emph{i.e.} if $\alpha^1 = \ldots = \alpha^J$.\\

In the case $S_{t-} <  \sum_{j=1}^J\lambda^j (T-t) \int_0^{\infty} pf^j(p) dp$, it is also interesting to notice that Eq. (\ref{multichar}) implies that

$$\forall t \in [0,T], \quad \sum_{j=1}^J\lambda^j (T-t) \int_0^{\tilde{b}^{j*}_\infty(t,S_{t-})} pf^j(p) dp = S_{t-}.$$

If we use the above approximation of the optimal bidding strategy, the dynamics of the remaining cash is (for $0 \le t \le s \le T$):

\begin{equation*}
dS_s = - \sum_{j=1}^J p^j_{N^j_s} \mathbf{1}_{\{b^{j*}_\infty(s,S_{s-})> p^j_{N^j_s}\}}dN^j_s, \qquad S_t = S.
\end{equation*}

Therefore:

$$d\mathbb{E}[S_s] = - \mathbb{E}\left[\sum_{j=1}^J \lambda^j \int_0^{\tilde{b}^{j*}_\infty(s,S_{s-})} p f^j(p) dp ds \right] = - \frac{\mathbb{E}[S_{s}]}{T-s} ds.$$

This gives $\mathbb{E}[S_s] = S \frac{T-s}{T-t}$. As in the one-source case, the approximation of the optimal bidding strategy is such that, on average, the remaining cash is spent evenly across $[t,T]$.

\subsection{What about conversions?}

So far, we have only considered problems where the KPI maximized by the ad trader was linked to the number of impressions he purchases. Practitioners are interested in other KPIs than the number of impressions (related to the CPM). In particular, the number of clicks, or the number of acquisitions of a product (following a click on a banner) are very important indicators of the success of a campaign -- the KPIs used in practice are the CPC and the CPA.\\

In what follows, we consider a model where two variables are optimized upon: the number of impressions, and the number of conversions (which can be regarded as clicks or acquisitions, depending on the considered applications). In fact, we generalize the previous model by considering more general marked Poisson processes, where the marks are not limited to the prices to beat, but also model the occurrence of a conversion with a random variable following a Bernoulli distribution (the parameter of this Bernoulli distribution is known as the \textit{conversion rate}, \emph{i.e.} the probability to turn an impression into a conversion).\\

\subsubsection{Setup of the model}

As above, we still consider an ad trader who wishes to spend a given amount of money $\bar{S}$ over a time window $[0,T]$.\\

\vspace{1cm}

\emph{Auctions:}\\

This ad trader is connected to $J>1$ sources from which he receives requests to participate in auctions in order to purchase inventory -- we assume that the trader knows from which source each auction request arises. Requests are modeled with $J$ marked Poisson processes: the arrival of requests from the source $j \in \lbrace 1, \ldots, J \rbrace$ is triggered by the jumps of the Poisson process $(N^j_t)_t$ with intensity $\lambda^j > 0$, and the marks $(p^j_n)_{n \in \mathbb{N}^*}$ and $(\xi^j_n)_{n \in \mathbb{N}^*}$ correspond, for each auction request sent by the source $j$, respectively to the highest bid sent by the other participants, and to the occurrence of a conversion -- $\xi^j_n \in \{0,1\}$ only makes sense if the auction is won by the ad trader.\\

Every time he receives from the source $j$ a request to participate in an auction, the ad trader can bid a price: at time $t$, if he receives a request from the source $j$, then we denote his bid by $b^j_t$. As in the above model, we assume that for each $j \in \lbrace 1, \ldots, J \rbrace$, the process $(b^j_t)_t$ is a predictable process with values in $\mathbb{R}_+ \cup \{+\infty\}$.\\

If at time $t$ the $n^{\text{th}}$ auction associated with the source $j$ occurs, the outcome of this auction is the following:
\begin{itemize}
  \item If $b^j_t > p^j_{n}$, then the ad trader wins the auction: he pays the price $p^j_{n}$ and his banner is displayed. Moreover, a conversion occurs if and only if $\xi^j_n = 1$.
  \item If $b^j_t \le p^j_{n}$, then another trader wins the auction.
\end{itemize}

As above, we assume that for each $j \in \lbrace 1, \ldots, J \rbrace$, $(p^j_n)_{n \in \mathbb{N}^*}$ are \emph{i.i.d.} random variables distributed according to an absolutely continuous distribution. We denote by $F^j$ the cumulative distribution function and by $f^j$ the probability density function associated with the source $j$. As above, we assume, for each $j \in \lbrace 1, \ldots, J \rbrace$, that:
\begin{itemize}
  \item $\forall n\in \mathbb N^*$, $p^j_n$ is almost surely positive. In particular, $F^j(0) = 0$.
  %\item $\forall n\in \mathbb N^*$, $p_n \in L^2(\Omega)$.
  \item $\forall p > 0, f^j(p) > 0$.
  \item $\lim_{p \to +\infty} p^3 f^j(p) = 0$.\\
\end{itemize}
We also assume that the random variables $(p^j_n)_{j \in \lbrace 1, \ldots, J \rbrace, n \in \mathbb{N}^*}$ are all independent.\\

As far as the variables $(\xi^j_n)_{j \in \lbrace 1, \ldots, J \rbrace, n \in \mathbb{N}^*}$ are concerned, we assume that they are all independent and independent from the variables $(p^j_n)_{j \in \lbrace 1, \ldots, J \rbrace, n \in \mathbb{N}^*}$. Moreover, we assume that for each $j \in \lbrace 1, \ldots, J \rbrace$, $(\xi^j_n)_{n \in \mathbb{N}^*}$ are \emph{i.i.d.} random variables distributed according to a Bernoulli distribution with parameter $\nu^j \in [0,1]$.\\

\emph{Remaining cash process:}\\

As above, we denote by $(S_t)_t$ the process modeling the amount of cash to be spent. Its dynamics is:

\begin{equation*}
dS_t = - \sum_{j=1}^J p^j_{N^j_t} \mathbf{1}_{\{b^j_t> p^j_{N^j_t}\}}dN^j_t, \quad S_0 = \bar{S}.
\end{equation*}

\emph{Inventory processes:}\\

For each $j \in \lbrace 1, \ldots, J \rbrace$, the number of impressions associated with the auction requests coming for the source $j$ is modeled by an inventory process $(I^j_t)_t$. For each $j \in \lbrace 1, \ldots, J \rbrace$, the dynamics of $(I^j_t)_t$ is:
\begin{equation*}
dI^j_t =\mathbf{1}_{\{b^j_t> p^j_{N^j_t}\}}dN^j_t, \quad I^j_0 = 0.
\end{equation*}

\emph{Processes for the number of conversions:}\\

For each $j \in \lbrace 1, \ldots, J \rbrace$, the number of conversions associated with the auction requests coming for the source $j$ is modeled by a new process $(C^j_t)_t$. For each $j \in \lbrace 1, \ldots, J \rbrace$, the dynamics of $(C^j_t)_t$ is:
\begin{equation*}
dC^j_t = \xi^j_{N^j_t} \mathbf{1}_{\{b^j_t> p^j_{N^j_t}\}}dN^j_t, \quad C^j_0 = 0.
\end{equation*}

\emph{Stochastic optimal control problem:}\\

In this second extension of the model, the trader aims at maximizing the expected value of an indicator of the form
$$\left(\alpha^1 I^1_T + \ldots + \alpha^J I^J_T\right) + \left(\delta^1 C^1_T + \ldots + \delta^J C^J_T\right) , \quad \alpha^1,\ldots, \alpha^J \ge 0,\quad \delta^1,\ldots, \delta^J \ge 0.$$

The relaxed problem we consider is:
\begin{equation*}
\inf_{(b^1_t, \ldots, b^J_t)_t \in \mathcal{A}^J}\mathbb E \left[-\sum_{j=1}^J \alpha^j I^j_T - \sum_{j=1}^J \delta^j C^j_T  + K \min\left(S_T,0\right)^2\right].
\end{equation*}

\subsubsection{HJB equations: from dimension $2J+2$ to dimension $2$}

The value function associated with this problem is:
$$
u: (t,I,C,S) \in [0,T]\times \mathbb{N}^J\times \mathbb{N}^J\times (-\infty, \bar{S}]$$$$ \mapsto  \inf_{(b^1_s, \ldots, b^J_s)_{s\ge t} \in \mathcal{A}_{t}^J} \mathbb E\left[-\sum_{j=1}^J \alpha^j {I_T^j}^{b,t,I^j} - \sum_{j=1}^J \delta^j {C^j_T}^{b,t,C^j} + K \min\left(S^{b,t,S}_T,0\right)^2\right],$$
where
\begin{equation*}
dS^{b,t,S}_s = - \sum_{j=1}^J p^j_{N^j_s} \mathbf{1}_{\{b^j_s> p^j_{N^j_s}\}}dN^j_s, \quad S^{b,t,S}_t = S,
\end{equation*}
\begin{equation*}
d{I^j_s}^{b,t,I^j} = \mathbf{1}_{\{b^j_s> p^j_{N^j_s}\}}dN^j_s, \quad {I^j_t}^{b,t,I^j} = I^j, \quad \forall j \in \lbrace 1, \ldots, J \rbrace,
\end{equation*}
and
\begin{equation*}
d{C^j_s}^{b,t,C^j} = \xi^j_{N^j_s} \mathbf{1}_{\{b^j_s> p^j_{N^j_s}\}}dN^j_s, \quad {C^j_t}^{b,t,C^j} = C^j, \quad \forall j \in \lbrace 1, \ldots, J \rbrace.
\end{equation*}

In particular, there are $J$ new state variables corresponding to the number of conversions associated with each of the $J$ sources.\\

The associated Hamilton-Jacobi-Bellman equation is:

\begin{equation*}
-\partial_t u(t,I,C,S) - \sum_{j=1}^J \lambda^j \inf_{b^j \in \mathbb R_+}\int_0^{b^j} f^j(p) \left[(1-\nu^j)(u(t,I+e^j,C,S-p) - u(t,I,C,S))\right.
\end{equation*}
 \begin{equation}
\label{m1:HJBconv}
 \left.+ \nu^j (u(t,I+e^j,C+e^j,S-p) - u(t,I,C,S)) \right] dp = 0,
\end{equation}
with terminal condition $$u(T,I^1,\ldots,I^J, C^1,\ldots,C^J,S) = - \sum_{j=1}^J \alpha^j I^j - \sum_{j=1}^J \delta^j C^j + K \min\left(S,0\right)^2.$$

Eq. (\ref{m1:HJBconv}) is a non-standard integro-differential HJB equation in dimension $2J+2$ which generalizes Eq. (\ref{m1:HJBmulti}). In this extension, we consider an ansatz of the form $$u(t,I^1,\ldots,I^J, C^1,\ldots,C^J,S) = - \sum_{j=1}^J \alpha^j I^j - \sum_{j=1}^J \delta^j C^j + v(t,S).$$

With this ansatz, Eq. (\ref{m1:HJBconv}) becomes another HJB equation, but in dimension $2$, which generalizes Eq. (\ref{m1:HJBmulti2}) to the case where conversions are taken into account:
\begin{equation}
\label{m1:HJBconv2}
-\partial_t v(t,S) - \sum_{j=1}^J \lambda^j \inf_{b^j \in \mathbb R_+} \int_0^{b^j} f^j(p) (v(t,S-p) - v(t,S)-\alpha^j - \nu^j \delta^j) dp = 0,
\end{equation}
with terminal condition $v(T,S) = K \min\left(S,0\right)^2$.

\subsubsection{Fluid limit approximation}

Eq. (\ref{m1:HJBconv2}) is the same as Eq. (\ref{m1:HJBmulti2}), except that $\alpha^j$ is replaced by $\alpha^j + \nu^j \delta^j$. In particular we can use similar rules as in the previous extension to approximate the optimal bidding strategy.\\

We introduce the function $H$ defined by:

\begin{eqnarray*}
H: x \in \mathbb{R} \mapsto  H(x) &=& \sum_{j=1}^J \lambda^j \sup_{b^j \in \mathbb R_+} \int_0^{b^j} f^j(p)\left(\alpha^j + \nu^j \delta^j + xp\right) dp\\
&=& \left\{
                    \begin{array}{ll}
                      \sum_{j=1}^J -\lambda^j x \int_0^{-\frac{\alpha^j + \nu^j \delta^j}{x}} F^j(p) dp , & x <0 \\
                      \sum_{j=1}^J \lambda^j \left(\alpha^j + \nu^j \delta^j+x\int_0^{\infty} pf(p) dp\right), & x \ge 0.
                    \end{array}
                  \right.
\end{eqnarray*}

In the case of conversions, we approximate the optimal bids $b^{1*}_t, \ldots, b^{J*}_t$ at time $t$ by $\tilde{b}^{1*}_\infty(t,S_{t-}), \ldots, \tilde{b}^{J*}_\infty(t,S_{t-})$, where the functions $(\tilde{b}^{j*}_\infty)_j$ are defined by:\\
\begin{itemize}
  \item If $S \ge  \sum_{j=1}^J\lambda^j (T-t) \int_0^{\infty} pf^j(p) dp$, then: $$\forall j \in \lbrace 1, \ldots, J \rbrace, \quad \tilde{b}^{j*}_\infty(t,S) = +\infty.$$
  \item If $S <  \sum_{j=1}^J\lambda^j (T-t) \int_0^{\infty} pf^j(p) dp$, then: \begin{equation*}\forall j \in \lbrace 1, \ldots, J \rbrace, \quad \tilde{b}^{j*}_\infty(t,S) = - \frac{\alpha^j + \nu^j \delta^j}{{H'}^{-1}\left(\frac{S}{T-t}\right)}.\end{equation*}
\end{itemize}

As in the previous extension of the model, if $S_{t-} \ge  \sum_{j=1}^J\lambda^j (T-t) \int_0^{\infty} pf^j(p) dp$, then it is natural to send a very high bid to be sure to win all the auctions.\\

If $S_{t-} <  \sum_{j=1}^J\lambda^j (T-t) \int_0^{\infty} pf^j(p) dp$, then the bids differ from one source to the other according to the following rule:
$$\forall j,j' \in \lbrace 1, \ldots, J \rbrace,\quad \frac{\tilde{b}^{j*}_\infty(t,S)}{\alpha^j+\nu^j \delta^j} = \frac{\tilde{b}^{j'*}_\infty(t,S)}{\alpha^{j'}+\nu^{j'} \delta^{j'}}.$$
In particular, if the ad trader only cares about the total number of conversions, \emph{i.e.} if $\alpha^1 = \ldots = \alpha^J = 0$ and $\delta^1 = \ldots = \delta^J = 1$, then the bids are the same across sources, up to a multiplicative factor which corresponds to the probability of conversion associated with each source.\\

Finally, in this extension as in the previous one, if one uses the fluid-limit approximation, then, the budget is expected to be spent evenly.

\subsection{Discussion about the models}

We have presented a first model in Section 2 and two extensions of that first model earlier in this section. In the following paragraphs, we aim at challenging the assumptions underlying these models. In particular, we discuss the specificities associated with second-price auctions, and the existence of floor prices, which goes against the assumptions on the distribution of the price to beat in the models presented above. We also discuss nonlinear KPIs for which there is no dimensionality reduction.

\subsubsection{First-price auctions vs. second-price auctions}

In both the initial model of Section 2 and the two extensions presented in this section, we considered Vickrey auctions. In other words, the price paid by the ad trader when he wins the auction is not the price he has bid (the first price), but a lower price corresponding to the highest bid that has been proposed by other participants (the second price). In the case of first-price auctions, the dynamics of the remaining budget (in the single-source model of Section 2) is not anymore given by Eq. (\ref{dynS}), but instead by:
\begin{equation*}
dS_t = - b_t \mathbf{1}_{\{b_t> p_{N_t}\}}dN_t, \quad S_0 = \bar{S}.
\end{equation*}

If we consider the same objective function as in Section 2, then the new value function
\begin{equation*}
u: (t,I,S) \in [0,T]\times \mathbb{N}\times (-\infty, \bar{S}] \mapsto  \inf_{(b_s)_{s\ge t} \in \mathcal{A}_{t}} \mathbb E\left[-I^{b,t,I}_T + K \min\left(S^{b,t,S}_T,0\right)^2\right],\end{equation*} where
\begin{equation*}
dS^{b,t,S}_s = - b_s \mathbf{1}_{\{b_s> p_{N_s}\}}dN_s, \quad S^{b,t,S}_t = S,
\end{equation*}
and
\begin{equation*}
dI^{b,t,I}_s = \mathbf{1}_{\{b_s> p_{N_s}\}}dN_s, \quad I^{b,t,I}_t = I,
\end{equation*}
is associated with the following HJB equation:
\begin{equation*}
-\partial_t u(t,I,S) - \lambda \inf_{b \in \mathbb R_+}F(b) (u(t,I+1,S-b) - u(t,I,S)) = 0,
\end{equation*}
with terminal condition $u(T,I,S) = - I + K \min\left(S,0\right)^2$.\\

With the ansatz $u(t,I,S) = - I + v(t,S)$, this equation becomes:
\begin{equation}
\label{m1:HJB2first}
-\partial_t v(t,S) - \lambda \inf_{b \in \mathbb R_+} F(b) (v(t,S-b) - v(t,S)-1) = 0,
\end{equation}
with terminal condition $v(T,S) = K \min\left(S,0\right)^2$.\\

Eq. (\ref{m1:HJB2first}) replaces Eq. (\ref{m1:HJB2}). In particular, the first order condition is not anymore
$$v(t,S-b) - v(t,S)-1 = 0,$$ but instead
$$v(t,S-b) - v(t,S)-1 = \frac{F(b)}{f(b)} \partial_S v(t,S-b).$$

In particular, because $v$ is nonincreasing with respect to $S$, the optimal bid should always be lower in the case of first-price auctions than in the case of second-price auctions.\\

It is also noteworthy that a fluid-limit approximation can be considered in the case of Eq.~(\ref{m1:HJB2first}) by solving

\begin{equation}
\label{m1:HJB2firsttilde}
-\partial_t \tilde{v}(t,S) + \lambda \sup_{b \in \mathbb R_+} (F(b) b \partial_S \tilde{v}(t,S) + F(b)) = 0,
\end{equation}
with terminal condition $\tilde{v}(T,S) = K \min\left(S,0\right)^2$.\\

In particular, Eq. (\ref{m1:HJB2firsttilde}) is a first-order Hamilton-Jacobi equation, and by using the same techniques as in the case of second-price auctions, one also finds that the optimal strategy consists of spending the remaining budget evenly.\\

\subsubsection{Floor prices}

In order to improve performances, publishers can set floor prices which sometimes modify the underlying nature of the auction. Floor prices come indeed in two flavors:
\begin{itemize}
\item Hard floors (or reserve prices): the bid of the ad trader is only taken into account if it is higher than the floor price level $\phi$ set by the publisher (the supply side). When a hard floor level $\phi$ is set, everything works as if a ``ghost player'' was always bidding~$\phi$. In particular, the cumulative distribution function $F$ may be discontinuous, with a jump at price $\phi$.\footnote{It is noteworthy that different publishers may set different hard floors. In other words, there may be several jumps in $F$.}
\item Soft floors: if the best bid is below the soft floor level $\phi$, then the winner pays its own bid, and not the second price; otherwise, the price paid is the maximum between the second price and $\phi$. In other words, the auction is no more of the Vickrey type: it is a second-price auction with a hard floor for large bids and it becomes a first-price auction for small bids.\\
\end{itemize}

Our model can be generalized to tackle the case of hard floors by generalizing our approach to the case of a discontinuous cumulative distribution function $F$ -- in that case $f$ can be regarded as a distribution. The result is still that the budget should be spent evenly, but the optimal bidding strategy is more cumbersome to write -- and the exact expression has no theoretical interest. The case of soft floors can also be addressed, but it is really cumbersome. In particular, in the case of a uniform soft floor $\phi$ for all the auctions, Eq.~(\ref{m1:HJB2}) is replaced by an equation of the form

$$
-\partial_t v(t,S) - \lambda \inf\left\{\inf_{b \in [0,\phi]} F(b) (v(t,S-b) - v(t,S)-1),\right.$$
$$\left. \inf_{b>\phi}  \int_0^b f(p) (v(t,S-\max(p,\phi)) - v(t,S)-1) dp  \right\} = 0,$$
with terminal condition $v(T,S) = K \min\left(S,0\right)^2$.\\

In practice, it is also noteworthy that our bid may impact the strategic behavior of other users and in particular the behavior of the publishers who may set \textit{dynamic floor prices}. From a modeling point of view, it means that the function $f$ is impacted by the strategy itself, and it opens a vast field where (mean-field) game theory could be very useful.

\subsubsection{Nonlinear KPIs}

It is customary in the advertising industry to work with different metrics or KPIs. Practitioners usually aim indeed at minimizing the CPM, the CPC (cost per click) or the CPA (cost per acquisition).\\

With the notations of our model, the CPM at time $T$ is naturally defined by:
$$\text{CPM}_T = \frac{\bar{S} - S_T}{I_T}.$$
If the total spending is imposed to be $\bar{S}$, minimizing the expected value of $\text{CPM}_T$ boils down to minimizing $\mathbb{E}\left[\frac 1{I_T}\right]$, and not maximizing $\mathbb{E}[I_T]$ -- or, equivalently, minimizing $\mathbb{E}[-I_T]$, as in the model of Section 2.\\

It is noteworthy that our model somehow ignores the risk-aversion effect induced by the nonlinearity of traditional KPIs, but this is only a side-effect of convexity, as nonlinear KPIs have not been built to capture any form of risk aversion. In fact, the real issue is that the change of variables (from $u$ to $v$) at the heart of the dimensionality reduction used in our model does not extend to nonlinear KPIs. Nevertheless, from a mathematical perspective,  our approach based on the dynamic programming principle (and HJB equations \eqref{m1:HJB}) remains valid for any KPI.\footnote{The KPI only impacts the terminal condition on $u$.}\\

The above analysis for the CPM is also valid in the case of the CPC and the CPA. In the extension of the model involving conversions -- which may be regarded as clicks or acquisitions -- we considered a linear function of the number of conversions $\sum_{j=1}^J \delta^j C^j_T$, whereas practitioners would rather consider nonlinear KPIs related to the average amount paid to obtain the different types of conversion.\\

Overall, we think that linear KPIs are as relevant as the traditional nonlinear ones currently used by marketers (when the total budget is fixed), and that linear KPIs should be preferred, from a mathematical perspective, for solving the problems faced by ad trading desks.

\section*{Conclusion}

In this research paper, we have addressed several problems faced by media trading desks buying ad inventory through real-time auctions. These problems are new for the community of applied mathematicians, as the industry of programmatic advertising is itself quite recent. However, many ideas coming from the works of applied mathematicians on algorithmic trading in quantitative finance are inspiring for developing new models and subsequently lighting the way to innovation in a new and fast-growing field which is in demand for mathematical methods (as Finance was 25 years ago).\\

Our contribution is multifold. First of all, from a modeling perspective, we model auctions arriving at random times by marked Poisson processes, where the jumps of the Poisson processes stand for the occurrence of auction requests and the marks for several variables such as the best bid proposed by other participants, and the occurrence of a conversion. This approach makes it possible to use the dynamic programming principle and derive a simple characterization of the optimal bidding strategy -- through a Hamilton-Jacobi-Bellman equation. Furthermore, it allows to consider multiple sources and types of inventory in a very simple way.\\

Secondly, by considering linear objective criterions (or KPIs), we manage to reduce the dimensionality of the problem. In particular, whatever the initial number of inventory sources and types, and whether or not we consider conversions, we show that the problem always boils down to solving a Hamilton-Jacobi-Bellman PDE in dimension 2 (one time dimension and one spatial dimension).\\

Thirdly, by considering a fluid-limit approximation of the problem, we obtain almost-closed-form solutions for the optimal bidding strategy. Moreover, the results in the fluid-limit approximation allow to characterize the optimal strategy not only in terms of the optimal bids, but in terms of an optimal scheduling for the budget that remains to be spent. The latter simplifies the implementation, as the optimal bidding strategy can be dynamically approximated by a feedback-control tracking mechanism.\\

Eventually, the modeling approach we have proposed in this article opens the door to future research where the optimal control of the bidding strategy is coupled with on-line learning (treated in a companion article -- see \cite{fgllearning}), where the different participants in the auctions -- including the publisher -- could adopt strategic behaviors, where the information about the outcomes of auctions is subject to an important latency, etc.\\

\section*{Appendix: A simple model in discrete time}

In this appendix, we present a simplified and discrete-time version of the model introduced in Section 2 for providing the readers -- especially those who are not used with continuous-time models -- with intuition about the optimization approach used throughout the paper.\\

Let us introduce a probability space $(\Omega, \mathcal{F}, \mathbb{P})$ equipped with a discrete filtration $(\mathcal{F}_n)_{n \in \mathbb{N}}$ satisfying the usual conditions. The variables we define in this subsection are considered over this filtered probability space.\\

Let us consider a bidding system participating in a sequence of $N$ Vickrey auctions. We assume that the state of the system is described by three variables: (i) the number $n$ of auction requests received by the ad trading desk, (ii) the number $I_n$ of impressions purchased after the $n^{\text{th}}$ auction (we assume that $I_0 = 0$), and (iii) the remaining budget $S_n$ after the $n^{\text{th}}$ auction -- we assume that $S_0 = \bar{S}$ is the maximal total amount to be spent. The goal of the algorithm is to maximize an objective function of the form $\mathbb{E}\left[g(N,I_N,S_N)\right]$, for some function $g$. In particular, $g(N,I_N,S_N)$ can be equal to $I_N$ if we want to maximize the number of impressions.\\

For each auction request, the algorithm chooses a bid level. For the $n^{\text{th}}$ auction, we denote here by $b_n$ the bid sent to the auction server ($b_n$ has to be $\mathcal{F}_{n-1}$-measurable). When the $n^{\text{th}}$ auction occurs, a random variable $p_n$ is drawn with the probability density function $f$. This random variable represents the price to beat.\\

The resulting dynamics of the system is the following:
\begin{itemize}
\item If $b_n > p_n$, then the algorithm wins the auction, the price paid is $p_n$, and the system evolves from the state $(n-1,I,S)$ to the state $(n,I+1,S-p_n)$.
\item If $b_n \leq p_n$, then the algorithm does not win the auction and the system evolves from the state $(n-1,I,S)$ to the state $(n,I,S)$.
\end{itemize}

Let us define the \textit{value function}:
$$u(n,I,S) = \max_{(b_k)_{k>n} \in \mathcal{A}_n} \mathbb{E}\left[g(N,I_N,S_N)|\mathcal{F}_n\right],$$
where $(b_k)_{k>n} \in \mathcal{A}_n$ if and only if $(b_k)_{k>n}$ is a predictable process such that $S_N \ge 0$, almost surely.\\

In order to solve this optimization problem, we simply use the dynamic programming principle which yields the following Bellman equation:
\begin{equation*}
u(n-1,I,S) = \max_{b_{n} \in [0,S]}\mathbb{E}\left[u(n,I+1,S-p_n)\mathbf{1}_{b_n > p_n} + u(n,I,S)\mathbf{1}_{b_n\leq p_n}\right]. \qquad \text{(A.1)}
\end{equation*}

Eq. (A.1) can also be written as the following difference equation
$$ u(n,I,S) - u(n-1,I,S)  + \max_{b_n \in [0,S]}\mathbb{E}\left[(u(n,I+1,S-p_n) - u(n,I,S))\mathbf{1}_{b_n > p_n}\right] = 0,$$
\emph{i.e.}:
\begin{equation*}
u(n,I,S) - u(n-1,I,S) + \max_{b_n \in [0,S]}\int_0^{b_n}(u(n,I+1,S-p) - u(n,I,S)) f(p) dp = 0. \qquad \text{(A.2)}
\end{equation*}

From the terminal condition $u(N,I,S) = g(N,I,S)$, the value function $u$ can easily be approximated numerically by backward induction on a grid.\\

The optimal bid for the $n^{\text{th}}$ auction is given by the optimality condition $$u(n,I_{n-1}+1,S_{n-1}-b^*_n) = u(n,I_{n-1},S_{n-1})$$ or by $b^*_n = S_{n-1}$ if $u(n,I_{n-1}+1,0) > u(n,I_{n-1},S_{n-1})$. In particular, when the constraint is not binding, the optimal bid is consistent with the fact that, in the case of Vickrey auctions, participants have the incentive to bid their true valuation for the item.\\

It is noteworthy that a similar change of variables as in the continuous-time model of Section 2 can be used in the special case where $g(N,I,S) = I$. In that case, if we write $u(n,I,S) = I + v(n,S)$, then the Bellman equation (A.2) can be simplified into:
\begin{equation*}
v(n,S) - v(n-1,S) + \max_{b_n \in [0,S]}\int_0^{b_n}(v(n,S-p) - v(n,S) + 1) f(p) dp = 0,
\end{equation*}
and the terminal condition is $v(N,S) = 0$.\\

This model in discrete-time is useful to understand the general modeling framework we use, but it is limited. First, in practice, auctions arrive at random times and we do not know in advance how many auction requests the algorithm will receive. Moreover, for problems where different sources of auction requests have to be treated in parallel, the above discrete modeling approach is not convenient. Continuous-time models, where the (random) occurrence of auction requests are modeled by the jumps of a Poisson process, are definitely more realistic and flexible.\\

\bibliographystyle{plain}

\begin{thebibliography}{10}
\bibitem{amin} Amin, K., Kearns, M., Key, P., \& Schwaighofer, A. (2012). Budget optimization for sponsored search: Censored learning in MDPS. arXiv preprint arXiv:1210.4847.
\bibitem{hft1} Avellaneda, M., \& Stoikov, S. (2008). High-frequency trading in a limit order book. Quantitative Finance, 8(3), 217-224.
\bibitem{balseiro1} Balseiro, S., \& Candogan, O. (2015). Optimal Contracts for Intermediaries in Online Advertising. Available at SSRN 2546609.
\bibitem{balseiro2} Balseiro, S. R., Feldman, J., Mirrokni, V., \& Muthukrishnan, S. (2014). Yield optimization of display advertising with ad exchange. Management Science, 60(12), 2886-2907.
\bibitem{bi} Barles, G., \&  Imbert, C. (2008). Second-order elliptic integro-differential equations: viscosity solutions' theory revisited. In Annales de l'IHP Analyse non lin\'eaire, 25(3), 567-585.
\bibitem{can} Cannarsa, P., \& Sinestrari, C. (2004). Semiconcave functions, Hamilton-Jacobi equations, and optimal control (Vol. 58). Springer Science \& Business Media.
\bibitem{vickrey2} Engelbrecht-Wiggans, R. (1993). Optimal auctions revisited. Games and Economic Behavior, 5(2), 227-239.
\bibitem{evans} Evans, L. C. (1999). Partial differential equations. American Mathematical Society
\bibitem{f1} Fernandez-Tapia, J. (2016). Statistical modeling of Vickrey auctions and applications to automated bidding strategies. Optimization Letters.
\bibitem{f2} Fernandez-Tapia, J. (2015). An analytical solution to the budget-pacing problem in programmatic advertising. Technical report.
\bibitem{fglpricing} Fernandez-Tapia, J., Gu\'eant, O., \& Lasry, J. M. (2016). On the pricing of performance-based programmatic ad buying contracts. Working paper.
\bibitem{fgllearning} Fernandez-Tapia, J., Gu\'eant, O., \& Lasry, J. M. (2016). Real-time bidding strategies with on-line learning. Working paper.
\bibitem{hft2} Gu\'eant, O., Lehalle, C. A., \& Fernandez-Tapia, J. (2013). Dealing with the inventory risk: a solution to the market making problem. Mathematics and financial economics, 7(4), 477-507.
\bibitem{hft3} Gu\'eant, O., Lehalle, C. A., \& Fernandez-Tapia, J. (2012). Optimal portfolio liquidation with limit orders. SIAM Journal on Financial Mathematics, 3(1), 740-764.
\bibitem{iab} IAB Europe (2015), European Programmatic Market Sizing 2014
\bibitem{feedback2} Karlsson, N., \& Zhang, J. (2013, June). Applications of feedback control in online advertising. In American Control Conference (ACC), 2013 (pp. 6008-6013). IEEE.
\bibitem{vickrey3} Laffont, J. J., \& Maskin, E. (1980). Optimal reservation price in the Vickrey auction. Economics Letters, 6(4), 309-313.
\bibitem{pa1} Lamberton, D., Pag\`es, G., \& Tarr\`es, P. (2004). When can the two-armed bandit algorithm be trusted?. Annals of Applied Probability, 1424-1454.
\bibitem{pa2} Lamberton, D., \& Pag\`es, G. (2008). How fast is the bandit?. Stochastic Analysis and Applications, 26(3), 603-623.
\bibitem{pa3} Lamberton, D., \& Pag\`es, G. (2008). A penalized bandit algorithm. Electron. J. Probab, 13, 341-373.
\bibitem{lee} Lee, K. C., Jalali, A., \& Dasdan, A. (2013, August). Real time bid optimization with smooth budget delivery in online advertising. In Proceedings of the Seventh International Workshop on Data Mining for Online Advertising (p. 1). ACM.
\bibitem{stravrogiannis} Stavrogiannis, L. C. (2014). Competition between demand-side intermediaries in ad exchanges (Doctoral dissertation, University of Southampton).
\bibitem{vickrey} Vickrey, W. (1961). Counterspeculation, auctions, and competitive sealed tenders. The Journal of finance, 16(1), 8-37.
\bibitem{yuan2} Yuan, S. (2015). Supply Side Optimisation in Online Display Advertising (Doctoral dissertation, UCL (University College London)).
\bibitem{yuan} Yuan, S., \& Wang, J. (2012, October). Sequential selection of correlated ads by POMDPs. In Proceedings of the 21st ACM international conference on Information and knowledge management (pp. 515-524). ACM.
\bibitem{zhang} Zhang, W., Yuan, S., \& Wang, J. (2014, August). Optimal real-time bidding for display advertising. In Proceedings of the 20th ACM SIGKDD international conference on Knowledge discovery and data mining (pp. 1077-1086). ACM.
\bibitem{feedback} Zhang, W., Rong, Y., Wang, J., Zhu, T. \& Wang, X. (2016). Feedback control of real-time display advertising. Working paper.
\end{thebibliography}
\nocite{*}

\end{document}